\newcommand{\half}{{\frac{1}{2}}}
\newcommand{\RR}{{\mathbb{R}}}
\newcommand{\Ker}{{\operatorname{Ker}}}
\newcommand{\vertiii}[1]{{\left\vert\kern-0.25ex\left\vert\kern-0.25ex\left\vert #1 
		\right\vert\kern-0.25ex\right\vert\kern-0.25ex\right\vert}}
\newcommand{\diag}{{\operatorname{diag}}}
\newcommand{\dist}{{\operatorname{dist}}}
\newcommand{\interior}{\operatorname{int}}
\newcommand{\support}{\operatorname{supp}}
\newcommand{\Span}{{\operatorname{span}}}
\newcommand{\Range}{{\operatorname{Range}}}
\xdef\f@size@small{\f@size}
\xdef\f@baselineskip@small{\f@baselineskip}
\xdef\f@size@normalsize{\f@size}
\xdef\f@baselineskip@normalsize{\f@baselineskip}
\newcommand{\smalltonormalsize}{%
  \fontsize
    {\fpeval{(\f@size@small+\f@size@normalsize)/2}}
    {\fpeval{(\f@baselineskip@small+\f@baselineskip@normalsize)/2}}%
  \selectfont
}
\title{Algebraic multigrid methods for metric-perturbed \\ coupled problems \thanks{Submitted to the editors DATE.
    \funding{AB and KAM acknowledge the financial support funded by the
	Norwegian Research Council grant 102155. The work of XH is partially supported by the National Science Foundation under grant DMS-2208267. MK acknowledges support from Norwegian Research Council grant 303362. The research of LTZ is supported in part by the U. S.-Norway Fulbright Foundation and the U. S. National Science Foundation grant DMS-2208249.}}}
\author{
	Ana Budi\v{s}a\thanks{ Simula Research Laboratory, Kristian Augusts gate 23, 0164 Oslo, Norway, (\email{ana@simula.no}, \email{miroslav@simula.no}, \email{kent-and@simula.no}). }
	\and Xiaozhe Hu\thanks{ Department of Mathematics, Tufts University, 177 College Ave, Medford, MA 02155, USA, (\email{xiaozhe.hu@tufts.edu}). }
	\and Miroslav Kuchta\footnotemark[2]
	\and Kent-Andre Mardal\footnotemark[2] \thanks{ Department of Mathematics, University of Oslo, P.O. Box 1053, Blindern, 0316, Oslo, Norway (\email{kent-and@math.uio.no}). }
	\and \newline Ludmil Tomov Zikatanov\thanks{ Department of Mathematics, Penn State, 239 McAllister Building, University Park, PA 16802, USA, (\email{ludmil@psu.edu}). }
}
\begin{document}
	
	\maketitle
	
	% REQUIRED
	\begin{abstract}
	  We develop multilevel methods for interface-driven multiphysics problems that can be coupled across dimensions and where complexity and strength of the interface coupling deteriorates the performance of standard methods. We focus on solvers based on aggregation-based algebraic multigrid methods with custom smoothers that preserve the coupling information on each coarse level. We prove that with the proper choice of subspace splitting we obtain uniform convergence in discretization and physical parameters in the two-level setting. Additionally, we show parameter robustness and scalability with regards to number of the degrees of
          freedom of the system on several numerical examples related to the biophysical processes in the brain,
          namely the electric signalling in excitable tissue modeled by bidomain,
          % the interaction of the viscous flow of the cerebrospinal fluid with the poroelastic brain (Biot-Stokes equations)
          % and the mixed-dimensional model of flow in vascularized tissue 
          EMI and reduced EMI equations.
	\end{abstract}
	
	% REQUIRED
	\begin{keywords}
		algebraic multigrid method, preconditioning, iterative method, coupled problems, graph Laplacian
	\end{keywords}
	
	% REQUIRED
	\begin{MSCcodes}
		65F08, 65N55, 65S05
	\end{MSCcodes}
	
	% ------------------------------------------------ %
	% new section
	% ------------------------------------------------ %

	\section{Introduction} \label{sec:introduction}

    In this paper we will consider multilevel methods for a family of problems of the 
    form: 
%       Find $u_\Omega \in V_\Omega$, $u_\omega\in V_\omega$ such that 
%        \begin{equation}
%            \label{abstract}
%        \left( 
%        \begin{pmatrix}
%        A_\Omega & \\ & A_\omega
%        \end{pmatrix}
%        + 
%        \gamma
%        \begin{pmatrix}
%        P_{\Omega\Omega} & P_{\Omega \omega}\\ P_{ \omega \Omega}& P_{\omega \omega} 
%        \end{pmatrix}
%        \right) 
%        \begin{pmatrix}
%        u_\Omega\\ u_\omega 
%        \end{pmatrix}
%        =    \begin{pmatrix}
%        f_\Omega\\ f_\omega 
%        \end{pmatrix} . 
%        \end{equation}
	Find $ u \in V(\Omega) $ such that
	\begin{equation} \label{abstract}
		\left( A_\Omega + \gamma R_\Gamma' R_\Gamma \right) u = f. 
	\end{equation}
	Here $A_\Omega: V(\Omega) \rightarrow V(\Omega)'$ is an elliptic operator, while $ R_\Gamma' R_\Gamma $, for $ R_\Gamma : V(\Omega) \to V (\Gamma)' $ and $ \Gamma \subset \overline{\Omega} $, represents a lower order and singular term that is strongly weighted ($\gamma \gg 1$). We will refer to it as the \emph{metric term} as by assumption $R_\Gamma' R_\Gamma$ is a symmetric and
	semi-definite operator. In fact, $ R_\Gamma $ are typically either identity or projection operators which in the limit $\gamma\rightarrow\infty$ together enforce coupling either in the whole domain or on parts of it. As $A_\Omega$ is elliptic, multilevel methods for these operators are readily available as solvers, but performance is typically lost for large $\gamma$. Specifically, the topic of this paper is to adapt the smoothers to obtain robustness in $\gamma$. 

%        Here, $A_\Omega: V_\Omega \rightarrow V_\Omega'$
%        and $A_\omega: V_\omega\rightarrow V_\omega'$
%        are elliptic operators while 
%        $$
%        P=
%        \begin{pmatrix}
%         P_{\Omega\Omega} & P_{\Omega\omega}\\ P_{\omega \Omega}& P_{\omega \omega} 
%        \end{pmatrix}
%        $$
%        represents a lower order and singular term that is strongly weighted ($\gamma \gg 1$).
%        We will refer to $P$ as the metric term as by assumption $P$ is a symmetric and
%        semi-definite operator. In fact,
%        $P_{ij}$ are typically either identity or projection operators which in the limit $\gamma\rightarrow\infty$
%        together enforce coupling between $u_\Omega$ and $u_\omega$ either in the whole domain or on parts of it. 
%        As $A_\Omega$ and $A_\omega$ are elliptic, multilevel methods for these operators
%        are readily available, but performance is typically lost for large $\gamma$. Specifically, the topic of this paper is to adapt the smoothers to obtain robustness in $\gamma$. 
        
        The abstract problem arises in many multicompartment, multiphysics, and multiscale applications. % where the couplings are strong.
        % We cannot make a comprehensive review here, but mention a few important examples. 
        For multicompartment problems, a common approach is to consider the system in terms of its blocks 
        and adapt appropriate block preconditioners. Examples are 
        the bidomain equations~\cite{sundnes2007computing, pavarino2008multilevel} in cardiac modeling and
        multiple-network poroelasticity problems~\cite{hong2019conservative, hong2020parameter, corti2022numerical, piersanti2020parameter, piersanti2021parameter} in porous media modeling. The block approach has been quite successful and in most 
        situations parameter-robust solution algorithms have been found. Arguably a more challenging family of 
        problems are the multiphysics problems coupled through a common interface, which is a manifold of codimension 1.
        For some, but not all of these problems,
        metric terms at the interface arise. Examples are the so-called 
        EMI (Extracellular-Membrane-Intracellular) model of excitable tissue
        \cite{agudelo2013computationally, stinstra2009comparison, tveito2017cell-1} or the Biot-Stokes
        coupled problems~\cite{boon2022parameter}.
        Finally, certain multiscale problems are interface coupled problems in which dimensionality of 
        one of the problems is reduced by model reduction techniques. Here, the examples are the 3D-1D problem of
        tissue perfusion~\cite{d2008coupling}
        or well-block pressure in reservoir simulations~\cite{peaceman1978interpretation}. In particular for tissue perfusion modeling of e.g. whole-brain vasculature corresponding to tens of millions to tens of billions vessels in mice and humans~\cite{linninger2019mathematical}, respectively, simulations are a major challenge  and 1D representation for the vascular networks seems like a reasonable assumption.

        Multigrid methods for singularly perturbed problems have been considered in several settings. Examples include discretizations of the linear elasticity equations in primal form~\cite{schoberl1999multigrid} or $H(\text{div})$ and $H(\text{curl})$ problems~\cite{arnold2000multigrid}.
        Furthermore, the methods were generalized to algebraic multilevel methods (AMG) in \cite{lee2007robust}. A crucial 
        observation is that kernel or near kernel must be carefully treated or else the performance of the method deteriorates when the coupling parameter $\gamma$ increases. 
        However, the main challenge for the problems considered here is to properly capture 
        the metric term. In practice, the term can be localized to a part of the domain, and is possibly represented on different meshes or by discretizations that do not necessarily conform to each other. 
        This type of systems have been studied previously in \cite{M3AS_singular, MathComp_singular} and were described as nearly-singular systems. In this paper, we build on the those results to establish the uniform convergence with respect to both the coupling parameter $ \gamma $ and discretization parameters of the two-level AMG method for solving metric-perturbed problems as in \eqref{abstract}.

        % \miro{Mention other solvers? Like DD?}
        
	The paper is organized as follows. After \Cref{sec:examples} where
        motivatory applications are presented we state our main results
        in \Cref{sec:two-level-AMG}. Experimental results showcasing robustness
        of the developed multgrid method are given in \Cref{sec:implementation}.
        We finally draw conclusions in \Cref{sec:conclusion}.

	% ------------------------------------------------ %
	% new section
	% ------------------------------------------------ %
	\section{Motivatory examples} \label{sec:examples}
        To motivate the computational method developed in this paper 
        we first provide several practical examples which fit the template
        of the abstract problem \eqref{abstract}.
	
	\subsection{Bidomain model} \label{sec:bidomain}
	An example of a multicompartment problem is the so-called bidomain equations used
 to model the electrical activity of the heart~\cite{tung1978bi}. It is a system of nonlinear ordinary (ODE) and partial (PDE) differential equations  typically solved using an operator-splitting approach to solve ODE and PDE parts separately, cf. the overviews 
 \cite{franzone2014mathematical, sundnes2007computing}. Then, at each PDE time step one seeks $ u_e : \Omega \to \RR $ and $u_i : \Omega \to \RR  $, such that 
	\begin{subequations}
		\label{eq:bidomain}
		\begin{align}
			- \nabla \cdot (\alpha_e \nabla u_e) + \gamma (u_e - u_i) & = f_e & \text{ in } \Omega, \\
			- \nabla \cdot (\alpha_i \nabla u_i) + \gamma (u_i - u_e) & = f_i & \text{ in } \Omega,
		\end{align}
	\end{subequations}
where, the unknowns $u_e$ and $u_i$ are the extracellular and intracellular potentials, respectively. Here,  $\Omega$ is the tissue and $\gamma$ relates inversely to the time step size, with suitable boundary conditions assigned. 
Efficient methods for the formulation of bidomain equations in terms of $u_i$ or $u_e$ and
the so-called transmembrane potential $u_i-u_e$ have been developed by e.g. ~\cite{dos2004parallel,pennacchio2009algebraic, sundnes2006computational, huynh2022scalable, zampini2014dual}. Here we focus on formulation \eqref{eq:bidomain} with unknown intra- and extracellular potentials.
	
	To solve the equations \eqref{eq:bidomain}, we discretize the system using finite element method (FEM). Denote with $L^2 = L^2(\Omega)$ the function space of square-integrable functions on $ \Omega $ and $ H^s = H^s(\Omega) $ the Sobolev spaces with $s$ derivatives in $ L^2 $. Furthermore, let $ V \subset H^1(\Omega) $ be the discretization by continuous linear finite elements ($ \mathbb{P}_1 $). %Consider $ A $ and $ M $ to be the discrete operator representations of the Laplacian $ -\Delta : V \to V' $ and the $ L^2 $-inner product on $ V $, respectively. 
	%Assuming that $ \alpha_e = \alpha_i = \alpha $ and $ \gamma = \frac{\tilde{\gamma}}{\alpha} $, 
	The discrete variational formulation states to find $ u_e, u_i \in V $ such that for $ f_e, f_i \in V' $
	\begin{equation} \label{eq:bidomain-problem}
		\left(
		\begin{pmatrix}
			- \alpha_e \Delta &  \\
			& \alpha_i \Delta
		\end{pmatrix}
		+
		\gamma 
		\begin{pmatrix}
			I & -I \\
			-I & I
		\end{pmatrix}
		\right)
		\begin{pmatrix}
			u_e \\ u_i
		\end{pmatrix}
		=
		\begin{pmatrix}
			f_e \\ f_i
		\end{pmatrix}
	\end{equation}
	% With $ \bar{V} = V \times V $, 
	we see that for $ \gamma > 0 $, the system is symmetric positive definite (SPD). 
	%and can be decomposed as follows
%	\begin{equation}
% \label{bidomain:matrix}
%		\bar{K}
%		=
%		\underbrace{
%			\begin{pmatrix}
%				\alpha_eA &  \\
%				& A
%			\end{pmatrix}
%		}_{\bar{A}}
%		+
%		\gamma \underbrace{
%			\begin{pmatrix}
%				M & - M \\
%				- M & M
%			\end{pmatrix}
%		}_{\bar{M}}.
%	\end{equation}
	%\ana{(Something about the size of the system and necessity for iterative solvers).} Therefore, we can use the standard Krylov methods, such as Conjugate Gradient (CG) method, to solve the above system. To accelerate and in general guarantee the convergence of the Krylov methods, we provide a preconditioner to the iterative solver based on the operator preconditioning framework \cite{mardal_winther}. In terms of elliptic operators such as $ \bar{K} $, algebraic multigrid (AMG) methods provide efficient preconditioners. \ana{some basic stuff on AMG algorithm}
 However, it contains a singular strongly weighted lower order term for which the kernel functions $ (v_e, v_i) \in V \times V $ are such that $ v_e = v_i $. It is clear that 
 the kernel contains both high and low frequency components, making it critical to handle the kernel in the multilevel algorithm~\cite{lee2007robust}.
 
 %However, the lower order term here is a \emph{metric}, which is different from reaction-diffusion or advection-diffusion equations. \ana{rewrite} 
%	Therefore, for large values of parameter $ \gamma $, as in the case of small time step size in ODE solver, the standard AMG smoothers such as Jacobi and Gauss-Seidel methods cannot guarantee $ \gamma $-uniform convergence. More specifically, \ana{show Jacobi smoother is not $ \gamma $-uniform}.
% \kent{show later(?). Example session will get very long.  }
	
%	The main issue is in the kernel of $ \bar{M} $, which can be characterized as follows 
%	\begin{equation}\label{eq:kernel_M}
%		\Ker(\bar{M}) = \left\{ x \in \bar{V} : x = \begin{pmatrix} u \\ u \end{pmatrix}, u \in V \right\} = \left\{  \begin{pmatrix} I_V \\ I_V \end{pmatrix} u : u \in V \right\},
%	\end{equation}
%	with $ I_V : V \to V' $ the identity operator on $ V $. Hence, $ \Ker(\bar{M}) $ is as large as the discrete space $ V $ and for $ \gamma \gg 1 $ it becomes the near-kernel of the full system operator $ \bar{K} $. Therefore, the multilevel method we develop in this paper is designed specifically to address the large (near) kernel and is shown to retain the uniform convergence in terms of physical and discretization parameters.
	% ------------------------------------------------ %

	\subsection{EMI model} \label{sec:emi}
        The modeling assumption of co-existence of the interstitium, extracellular space and
        the cell membrane which is at the core of the bidomain system \eqref{eq:bidomain} has 
        recently been challenged by the EMI models \cite{agudelo2013computationally, tveito2017cell-1} (also known as cell-by-cell models \cite{huynh2022convergence}).
        Here, the geometry of each of the compartments is resolved explicitly leading to a coupled
        mixed-dimensional problem posed on $D$-dimensional domains $\Omega_i\subset\Omega_e$ seperated
        by the inteface $\Gamma=\overline{\partial\Omega_i}\cap\overline{\partial\Omega_e}$ which
        is a manifold of codimension 1. Following the operator splitting approach as in
        \Cref{sec:bidomain} the PDE step now solves
	\begin{subequations}
		\label{eq:emi_2d}
		\begin{align}
                  - \nabla \cdot (\alpha_e \nabla u_e) &= 0 & \text{ in } \Omega_e, \\
		  - \nabla \cdot (\alpha_i \nabla u_i) &= 0 & \text{ in } \Omega_i, \\
                  \alpha_i\nabla u_i \cdot\nu_i + \alpha_e\nabla u_e\cdot\nu_e &= 0 &\text{ on }\Gamma,\\
                  \gamma(u_i - u_e) + \alpha_i\nabla u_i\cdot\nu_i &= f &\text{ on }\Gamma.
		\end{align}
	\end{subequations}
        Here $f$ is the source term coming from the ODE part and $\nu_{\iota}$ the normal
        vector on $\Gamma$ pointing outwards with respect to $\Omega_{\iota}$, $\iota\in\left\{i, e\right\}$.
        The system is typically equipped with homogeneous Neumann conditions on $\partial\Omega_e$. We remark that in
        \eqref{eq:emi_2d} we assumed, for simplicity, that $\Omega_e$ contains only a single
        cell/intracellular domain.

        Variational formulation of \eqref{eq:emi_2d} posed in $V_e\times V_i$ with 
        $V_e=H^1(\Omega_e)$, $V_i=H^1(\Omega_i)$ gives rise to a problem
        \begin{equation} \label{eq:emi_2d_operator}
          \left(
			\begin{pmatrix}
			  -\alpha_e\Delta & \\
                          & -\alpha_i\Delta \\
			\end{pmatrix}
                        +
                        \gamma
                        \begin{pmatrix}
                          \tau'_e\tau_e &  -\tau_e'\tau_i\\
                          -\tau'_i\tau_e & \tau'_i\tau_i\\
                        \end{pmatrix}
                        \right)
			\begin{pmatrix}
				u_{e} \\ u_{i}
			\end{pmatrix}
		=
			\begin{pmatrix}
				f_e \\ f_i
			\end{pmatrix},
	\end{equation}
        where the perturbation involves trace operators $\tau_{\iota}$ such that
        $\tau_{\iota}v_{\iota}=v_{\iota}|_{\Gamma}$ for $\iota\in\left\{i, e\right\}$ and
        $v_{\iota}$ a continuous function on the respective domains. We observe
        that the EMI model \eqref{eq:emi_2d} formulated in terms of intra-/extracellular
        potentials takes the form of the abstract problem \eqref{abstract} where
        in particular, %the operator $P$ 
        the perturbation operator here induces $(u_e, u_i)\mapsto \int_{\Gamma}(u_i-u_e)^2$.
        Robust domain-decomposition solvers for \eqref{eq:emi_2d_operator} have recently
        been developed in \cite{huynh2022convergence}.
        
	\subsection{Reduced 3D-1D EMI model} \label{sec:3d1d}
	In a number of applications in geoscience (e.g. resorvoir simulations) and biomechanics (e.g. microcirculation)
        the EMI model \eqref{eq:emi_2d} is applied in geometrical setups where the domain
        $\Omega_i$ is large but slender such that its resolution as a 3D structure by a computational
        mesh is impractical. This issue is addressed by model reduction which results in a 1D representation
        of $\Omega_i$ by a smooth (centerline) curve $\Gamma$.
        In \cite{d2008coupling} a mathematical formulation of a 3D-1D coupled problem with application to
        tissue perfusion was analyzed. The numerical approximation of the problem has been a topic of many
        subsequent works, see e.g. \cite{gjerde2020singularity,hodneland2021well,laurino2019derivation,koppl2018mathematical,kuchta2021analysis} 
        as it includes in particular the challenge of traces of co-dimension 2 for standard elliptic problems
        which are not well defined on $H^1(\Omega)$, $\Omega=\Omega_i\cup\Omega_e$.
        Relatively few works have considered preconditioners for such problems
        \cite{hu2023effective, kuchta2019preconditioning, cerroni2019mathematical}.
        
        To fit into the abstract setting of \eqref{abstract} we here consider
        a reduced EMI problem \cite{laurino2019derivation}: Find $u_e\in H^1(\Omega)$, $u_i\in H^1(\Gamma)$ such
        that 
        \begin{equation} \label{eq:emi_3d_operator}
          \left(
			\begin{pmatrix}
			  -\alpha_e\Delta & \\
                          & -\alpha_i\Delta \\
			\end{pmatrix}
                        +
                        \gamma
                        \begin{pmatrix}
                          \Pi_{\rho}'\Pi_{\rho} &  -\Pi_{\rho}'\\
                          -\Pi_{\rho} & I\\
                        \end{pmatrix}
                        \right)
			\begin{pmatrix}
				u_{e} \\ u_{i}
			\end{pmatrix}
		=
			\begin{pmatrix}
				f_e \\ f_i
			\end{pmatrix}.
	\end{equation}
        Here $\Pi_{\rho}$ is the \emph{averaging} operator reducing $u\in H^1{(\Omega)}$
        to $\Gamma$ by computing the function's average over a virtual cylinder
        with radius $\rho$ which approximates the domain $\Omega_i$. More precisely,
        we let 
	\begin{equation}\label{eq:average_op}
	  \left(\Pi_{\rho} u\right)(x) = \frac{1}{\lvert C^{\nu}_\rho(x) \rvert} \int_{C^{\nu}_\rho(x)} u \quad u\in H^1{(\Omega)},
	\end{equation}
        where $x\in\Gamma$ and $C^{\nu}_\rho(x)$ is a circle of radius $\rho(x)$ in the plane
	with a normal $\nu=\tfrac{\mathrm{d}\Gamma}{\mathrm{d}s}(x)$ and $s$ being the arc-length
        coordinate of $\Gamma$. Furthermore, for a smooth function $v$ on $\Gamma$ we define
        $\Delta v = \tfrac{\mathrm{d^2} v }{\mathrm{d}s^2}$.
        
        We remark that the perturbation operator in \eqref{eq:emi_3d_operator}
        is symmetric while the formulations \cite{gjerde2020singularity, d2008coupling}
        utilize also a standard trace operator in the coupling (in addition to $\Pi_{\rho}$)
        leading in turn to a non-symmetric coupling term. Let us finally stress that \eqref{eq:emi_3d_operator}
        is typically only a component in advanced models that include convection and other processes, cf.
        \cite{goirand2021network,hartung2021mathematical}.

	\section{Two-level AMG for metric-perturbed coupled problems} \label{sec:two-level-AMG} %\ana{XIAOZHE EDIT THIS SECTION}
		
	In this section, we first reformulate the example systems of PDEs into a more general setting. This allows us to introduce aggregation-based AMG methods, a general class of methods that can be used to solve a wide variety of PDE systems. We then prove the uniform convergence of a two-level AMG method under certain assumptions on the underlying subspace decomposition. These assumptions are shown to be sufficient and suitable for the problems we consider.
	 
	% ------------------------------------------------ %
	
	\subsection{Preliminaries} \label{sec:preliminaries}
    Let $ \Omega \subset \mathbb{R}^{d_{\Omega}} $, $ \Upsilon \subset \mathbb{R}^{d_{\Upsilon}} $ and $ \Gamma \subset \mathbb{R}^{d_{\Gamma}} $ such that $ 0 < d_{\Gamma} \leq d_{\Omega}, d_{\Upsilon} \leq 3 $, $ \overline{\Omega}\cap \overline{\Upsilon}\neq\emptyset $ and $\overline{\Gamma} \subset \overline{\Upsilon} $. On each of the domains we introduce quasi-uniform triangulation and a corresponding finite element space $V_i$, $i\in\left\{\Omega, \Upsilon, \Gamma \right\}$ with $V_\Gamma \subseteq V_\Upsilon$. %Also, define $ N_\Omega = \dim(V_\Omega) $, $ N_\Upsilon = \dim(V_\Upsilon) $ and $ N = \dim(V) = N_\Omega + N_\Upsilon $.
    For $ V = V_\Omega \times V_\Upsilon $ and we then consider bilinear forms $ a_0(\cdot, \cdot), a_1(\cdot, \cdot) : V \times V \to \RR $ defined as follows,
    \begin{subequations} \label{eq:bilinear-forms}
       	\begin{align}
       		a_0((u_\Omega, u_{\Upsilon}), (v_\Omega, v_\Upsilon)) & = m_{\Gamma}(R (u_\Omega, u_{\Upsilon}) , R (v_\Omega, v_\Upsilon)), \\
       		a_1((u_\Omega, u_\Upsilon), (v_\Omega, v_{\Upsilon})) & = a_\Omega(u_\Omega, v_\Omega) + a_{\Upsilon}(u_{\Upsilon \backslash \Gamma}, v_{\Upsilon \backslash \Gamma}) + a_\Gamma(u_\Gamma, v_\Gamma).
       	\end{align}
    \end{subequations}
    for $ v_\Upsilon = (v_{\Upsilon\backslash\Gamma}, v_\Gamma) $ and $v_\Gamma \in V_\Gamma$. Here, $ a_\Omega(\cdot, \cdot) $, $ a_{\Upsilon}(\cdot, \cdot) $ and $ a_{\Gamma}(\cdot, \cdot) $ are bilinear forms corresponding to the elliptic equations, such as $ d_\Omega $-, $d_\Upsilon$- and $d_\Gamma$-Laplacians on their respective domains. The bilinear form $ m_{\Gamma}(\cdot ,\cdot)$ is a lower-order (mass) term in $ V_\Gamma $. The interface operator $ R : V \to V_\Gamma' $ defines a metric on the interface, that is for $ v = (v_\Omega, v_\Upsilon ) \in V $,
    \begin{equation}\label{eq:interface_op}
       	R v = v_\Gamma - \sigma(v_\Omega),
    \end{equation}
    where $ \sigma : V_\Omega \to V_\Gamma' $ is a linear restriction operator.
    In particular, we assume
    $ \| \sigma(v_\Omega) \|_{L^2(\Gamma) } \lesssim \| v_\Omega \|_{V_\Omega} $.
    The main problem we want to solve is to find $ u \in V $ such that
    \begin{equation} \label{eq:general_problem}
    	a(u, v) = \gamma a_0(u, v) + a_1(u, v) = f(v) \qquad \forall v \in V,
    \end{equation}
    where $f \in V'$ and $ \gamma \gg 1 $ is a coupling parameter.

	Finally, we can define operators representing the bilinear forms in \eqref{eq:bilinear-forms}. Let $ A, A_0, A_1 : V \to V' $ such that $ \langle A_0 u, v \rangle = a_0(u, v) $, $ \langle A_1 u, v \rangle = a_1(u, v) $ and $ A = \gamma A_0 + A_1 $ for $ u, v \in V $. Here $ \langle \cdot, \cdot \rangle $ is the duality pairing between $ V $ and its dual $ V' $. Additionally, let $ \| v \|_{\tilde{A}}^2 = \langle \tilde{A} v, v \rangle $ for any symmetric positive definite operator $ \tilde{A} $ on $ V $ and $ v \in V $. Equivalently to \eqref{eq:general_problem} we want to find $ u \in V $ such that
	\begin{equation} \label{eq:matrix-general-problem}
		\langle A u, v \rangle = \gamma \langle A_0 u, v \rangle + \langle A_1 u, v \rangle = \langle f, v \rangle \qquad \forall v \in V.
	\end{equation}
	We refer this system as the \emph{metric-perturbed coupled problem}, since it is perturbed by a lower-order term $ A_0 $ that can dominate when $ \gamma \gg 1 $.

	\begin{remark} \label{remark:about-R}
		In general, the interface operator \eqref{eq:interface_op} can be represented as
		$
		R = \begin{pmatrix}
			-\sigma_{\Omega} & \sigma_{\Upsilon}
		\end{pmatrix}
		$
		where $ \sigma_{\Omega} $ and $ \sigma_{\Upsilon} $ are linear restriction operators (trace or averaging) on $ \Gamma = \overline{\Omega} \cap \overline{\Upsilon} $. This generality would represent the case of non-conforming meshes between each subdomain $ \Omega $ and $\Upsilon$ and their interface $ \Gamma $. For example, in the EMI model \eqref{eq:emi_2d}, $ \sigma_{i} $ are the respective trace operators $\sigma_{\iota}(v)=v|_{\Gamma}$, $\iota\in \{\Omega, \Upsilon\}$. However, we assume that at least on of the triangulations of subdomains, namely $ \Upsilon $, conforms to the interface (such that $ V_\Gamma \subseteq V_\Upsilon $) and the restriction operator becomes of form $\sigma_\Upsilon = \begin{pmatrix}
			I_\Gamma & 0_{\Upsilon \backslash \Gamma}
		\end{pmatrix}. $
		
		That implied, we see that the subdomain part $ \Upsilon \backslash \Gamma $ does not contribute to the metric coupling term $ a_0(\cdot, \cdot) $. Therefore, the component of functions in $V_\Upsilon $ defined only on $ \Upsilon \backslash \Gamma $ will not influence the convergence of the AMG method with regards to parameter $ \gamma $ and can be smoothed using standard methods, such as Gauss-Seidel or Jacobi method.
		
		Hence, to simplify the exposition of the convergence theory, we will consider only the case when $ V_\Gamma = V_\Upsilon $ further in this paper.
		For example, in the reduced $3D$-$1D$ EMI model \eqref{eq:emi_3d_operator}, we have that $ \Upsilon = \Gamma $ is a curve in $ \Omega\subset\mathbb{R}^3 $, $ \sigma_{\Omega} $ is the averaging operator \eqref{eq:average_op} while $ \sigma_{\Upsilon}$ is the identity map.
		Note that for the bidomain model \eqref{eq:bidomain} we have $ \Omega = \Upsilon = \Gamma $ and the restriction operators are simply identities.
	\end{remark}
	
	\begin{remark} \label{remark:about-bound-on-A}
		By assumptions of elipticity of $A_1$ and boundedness of the restriction operators in the $A_1$-induced norm on $V$ the
		following equivalence holds
		\[
		\lVert u \rVert_{A_1}^2 \leq \langle Au, u \rangle \lesssim \gamma \lVert u 		\rVert_{A_1}^2\quad \forall u\in V
		\]
		and we observe that the upper bound depends on $\gamma$. That is, preconditioning strategies based on the block-diagonal operator $A_1$ (e.g. AMG with point smoothers) cannot be robust in the coupling parameter.
	\end{remark}

	In the following, we slightly abuse the notation and consider $ A $, $ A_1 $ and $ A_0 $ to be matrices and $u_{\Omega}$ and $u_{\Gamma}$ to be vectors that we obtain from a choice of a FEM basis, such as the linear continuous finite elements, i.e. $ \mathbb{P}_1 $ elements. We will present theoretical results based on reformulating the problem in terms of graph Laplacians. To do so, we introduce the undirected graph $ \mathcal{G}(A) $ associated with the sparsity pattern of the symmetric positive definite matrix $ A $. The vertices of $ \mathcal{G}(A) $ are labeled as $ \mathcal{V} = \{ 1, 2, \dots, N \} $, where $N$ represents the number of degrees of freedom (DOFs) of $ V $. We use $ \mathcal{E} $ to denote the collection of edges $ e = (i, j) $ if $(A)_{ij} \neq 0 $, with an intrinsic ordering. Specifically, we order any graph edge $ e =(i, j) $ with $ j < i $ for $ i, j \in \mathcal{V} $. It is worth noting that the following equivalences are well-known:
	\begin{enumerate}
		\item In the context of finite element methods, mass matrices represent the matrix form of the $ L^2 $-inner product on a portion or the entirety of the domain, and are equivalent to diagonal matrices, as shown in \cite{wathen1987realistic}. For instance, this equivalence holds for $ \mathbb{P}_k$ elements, where the constants depend only on the polynomial order $k$. On the other hand, stiffness matrices, which correspond to second-order elliptic operators such as $ A_1 $, are spectrally equivalent to weighted graph Laplacians. We provide a sketch of the proof in \Cref{sec:apx-graph-laplacians} using results from \cite[Lemma~14.1]{2017XuZikatanov-aa}.
		\item If $\widetilde{A} : V \to V'$ is any positive semidefinite matrix and $\widetilde{D}$ its diagonal, then we have
		\begin{equation}\label{eq:A-le-D}
			\| v \|_{\widetilde{A}}^2 \lesssim \| v \|_{\widetilde{D}}^2 \qquad v \in V.
		\end{equation}
		The constants hidden in this estimate depend on the number of nonzeroes per row in $\widetilde{A}$. The estimate is easily derived using the Schwarz inequality. 
		\item In particular, for any graph Laplacian on $V$, the following \emph{Poincar{\'e} inequality} holds:
		\begin{equation} \label{eq:graph-poincare}
			\inf_{c \in \RR} \| v - c \mathbbm{1} \|_{\widetilde{D}}^2  \lesssim \| v \|_{\widetilde{A}}^2 \qquad v \in V.
		\end{equation}
		The constants are determined by the weights in $ \widetilde{A} $ and are proportional to the square of the number of vertices in the graph divided by the square of the size of the minimal cut in the graph. The complete proof can be found, for example, in~\cite{1989JerrumM_SinclairA-ab}. This result is used only locally for small size graphs, namely on each aggregate that represents the coarse scale degree of freedom in the two-level AMG method. 
	\end{enumerate}
	
	Consequently, the bilinear forms from \eqref{eq:bilinear-forms} can be replaced by their equivalent graph forms. Let $ \mathcal{V} = \mathcal{V}_\Omega \cup \mathcal{V}_\Gamma $ be the division of graph vertices into two subsets with regards to discretizations of $ \Omega $ and $ \Gamma $, respectively. Similarly, let $ \mathcal{E} = \mathcal{E}_\Omega \cup \mathcal{E}_\Gamma $. Then for $ u = (u_\Omega, u_\Gamma) \in V $ and $ v = (v_\Omega, v_\Gamma) \in V $ we get
	\begin{subequations} \label{eq:graph-bilinear}
		\begin{align}
			a_{\Omega}(u_\Omega, v_\Omega) & \eqsim \sum_{e \in \mathcal{E}_{\Omega}} \omega_e \, \delta_e u_\Omega \, \delta_e v_\Omega \label{eq:graph-bilinear-a} \\
			a_{\Gamma}(u_\Gamma, v_\Gamma) & \eqsim \sum_{e \in \mathcal{E}_{\Gamma}} \omega_e \, \delta_e u_\Gamma \, \delta_e v_\Gamma \label{eq:graph-bilinear-b} \\
			a_0(u, v) & \eqsim \sum_{k = 1}^{N_\Gamma} m_k \left( u_{\Gamma, k} - (\sigma(u_\Omega))_k \right) \left( v_{\Gamma, k} - (\sigma(v_\Omega))_k \right) \label{eq:graph-bilinear-c} \\
			\delta_e v = v_i - v_j,  \quad e &= (i,j), \quad \omega_e = \omega_{ij} > 0, \quad j<i, \nonumber
		\end{align}
	\end{subequations}
	where $N_\iota = \dim V_\iota $, $\iota \in \{\Omega, \Gamma \}$. The weights $ \omega_e $ depend on the shape regularity of the mesh and their behavior is as $ h^{d-2} $
	if they correspond to a $ d $-homogeneous simplicial complex. The elements $ m_k $ behave like $ h^{d_\Gamma} $ if $ \Gamma $ corresponds to a $ d_\Gamma $-homogeneous simplicial complex. In the following section, we design aggregation-based AMG method with special Schwarz smoothers to solve \eqref{eq:matrix-general-problem}. For that, we use the above equivalences for the bilinear forms to prove that this AMG method satisfies the kernel and stability conditions that guarantee uniform convergence. 
	
	% ------------------------------------------------ %
	
	\subsection{Convergence of the two-level AMG} \label{sec:convergence-AMG}
	The main idea of any algebraic multigrid (AMG) method is to construct a hierarchy of nested vector spaces, each of which targets different error components for the solution of \eqref{eq:matrix-general-problem}. In the case of aggregation-based AMG methods, such as unsmoothed aggregation AMG (UA-AMG) and smoothed aggregation AMG (SA-AMG), there is an added advantage in that multiple approximations of near-kernel components of the matrix describing the linear system can be retained as elements of each subspace in the hierarchy. To illustrate this, we first introduce the necessary ingredients of AMG in the context of subspace correction methods \cite{M3AS_singular, Xu1992SIAMReview, XuZikatanov2002}.
	
	Let us introduce the decomposition $ V = V_c + \sum\limits_{j = 1}^J V_j $ where $V_c \subset V$ and $ V_j \subset V $, $j=1,\cdots,J$. Then the AMG preconditioner associated with such subspace splitting for the system \eqref{eq:matrix-general-problem} is defined as
	\begin{subequations} \label{eq:preconditioner}
		\begin{align}
			B & = P_c + S, \quad S = \sum_{j=1}^J P_j,   \text{ where } \label{eq:preconditioner-a}\\
			\left\langle S^{-1} w, w \right\rangle  & = 
			\inf \left\{ \sum_{j = 1}^J \| w_j \|_A^2 \, : w = \sum_{j=1}^J w_j \text{ and } w_j \in V_j, \, j = 1, \dots, J \right\}, \label{eq:preconditioner-b}\\
			\left \langle B^{-1} v, v \right\rangle  & = 
			\inf \left\{\| v_c \|_A^2 + \sum_{j = 1}^J \| v_j \|_A^2 \, : v = v_c + \sum_{j=1}^J v_j \text{ and } v_c \in V_c,\, v_j \in V_j, \, 1, \dots, J \right\}, \label{eq:preconditioner-c}
		\end{align}
	\end{subequations}
	where $ P_j $ are the $A$-orthogonal projections on $ V_j $ for $ j = 1, \dots, J $. Here, $ V_c $ accounts for the correction on a coarse (sub)space, while $ V_j $ for $ j \geq 1 $ define a Schwarz-type smoother on the fine grid. 
	
	Choosing the appropriate subspace decomposition is the essence of a robust and efficient preconditioner for the system  \eqref{eq:matrix-general-problem}. Therefore, we want to show that, within certain assumptions, $B$ is a uniform preconditioner for $ A $ with regards to $ \gamma $ and $ h $. The assumptions required in the convergence analysis are as follows.
	\begin{enumerate}[(I)]
		\item \textbf{Kernel decomposition condition}: Find the subspace decomposition $ V_j $ for $ j = 1, \dots, J $, such that
		\begin{equation} \label{eq:kernel-condition}
			\Ker(A_0) = \Ker(A_0) \cap V_c + \sum_{j = 1}^{J} \Ker(A_0) \cap V_j.
		\end{equation}
		\item \textbf{Stable decomposition condition}: 
		For a given $ v \in V $, there exist a splitting $ \{ v_c \} \cup \{ v_j \}_{j = 1}^J $, $v_c \in V_c$ and $\, v_j \in V_j $ such that
		\begin{equation} \label{eq:stable-condition}
			\| v_c \|_{A_1}^2 + \sum\limits_{j = 1}^J \| v_j \|_{A_1}^2 \lesssim \| v \|_{A_1}^2, \quad
			| v_c |_{A_0}^2 + \sum\limits_{j = 1}^J | v_j |_{A_0}^2 \lesssim | v |_{A_0}^2.
		\end{equation}
	\end{enumerate}
	Specifically, the subspace splitting of $ V $ defines a Schwarz preconditioner that uniformly bounds the condition number in $ \gamma $ if the kernel condition \eqref{eq:kernel-condition} is satisfied. Additionally, the uniform bound in $ h $ is guaranteed if for any $v \in V $ we construct an aggregate decomposition (coarse grid) stable in $\|\cdot\|_{A_1}$ that is also stable in $| \cdot |_{A_0}$.

\begin{remark} \label{remark:kernel-for-piecewise}
%\kent{Not sure that this remark should be placed here. Anyways. }
The kernel decomposition condition \eqref{eq:kernel-condition} usually fails for pointwise smoothers in the presence of metric terms. As illustration, we consider the bidomain equations where the kernel consists of 
functions of the form $u_e(x) = u_i(x)$, $ x \in \Omega $. 
If $e_j$ is the $j$-th coordinate vector, then a point smoother only corrects in the one dimensional space $\operatorname{span}\{e_j\}$. Clearly, $\Ker(A_0)\cap\operatorname{span}\{e_j\} = \{0\}$ as 
$\Ker \left(A_0\right) =
\operatorname{Range}\left\{\begin{pmatrix}
I  \\  I
\end{pmatrix}\right\}$. This shows that the condition \eqref{eq:kernel-condition} does not hold and as is seen from the numerical tests presented later the method with point-wise smoother is far from optimal with respect to  the size of the coupling term $\gamma$. 
% while the pointwise smoother 
% is represented in terms of 
% $
% \begin{pmatrix}
% V_j  \\  0 
% \end{pmatrix}, 
% \begin{pmatrix}
% 0 \\ V_j   
% \end{pmatrix} 
% $
% for $j = 0, \ldots, J$. \ana{I'm not sure what the last sentence means?}
\end{remark}

Assuming these two conditions hold, we first show the main results on the condition number estimate of the system \eqref{eq:matrix-general-problem} preconditioned with $ B $ \eqref{eq:preconditioner}. More precisely, we show that the condition number $ \kappa(BA) $ of the preconditioned system \eqref{eq:matrix-general-problem} is bounded uniformly with respect to $ \gamma $ and $h$. Note that the orthogonal complement of $ \Ker(A_0) $ can be defined as follows,
	\begin{align} \label{eq:orthogonal-compl-kerA0}
		\Ker(A_0)^\perp & = \{ y \in V : \langle A y, z \rangle = 0, \, \forall z \in \Ker(A_0)  \} \nonumber \\
						& = \{ y \in V : \langle A_1 y, z \rangle = 0, \, \forall z \in \Ker(A_0)  \},
	\end{align}
	and similarly we can define local kernels $ \Ker(A_0) \cap V_j $ and kernel complements $ \Ker(A_0)^\perp \cap V_j $, for $ j = c, 1, 2, \dots, J $. Furthermore, we define projections to local subspaces, that is for any $ v \in V $, let $ P_j : V \to V_j $, $ P_{1, j} : V \to V_j $, and $ P_{0, j} : V \to \Ker(A_0)^\perp \cap V_j $ such that for all $ w_j\in V_j $
		\begin{align} \label{eq:projections}
			\langle A (P_j v), w_j \rangle & = \langle A v, w_j \rangle, \nonumber \\
			\langle A_1 (P_{1,j} v), w_j \rangle & = \langle A_1 v, w_j \rangle, \\
			\langle A_0 (P_{0,j} v), w_j \rangle & =  \langle A_0 v, w_j \rangle. \nonumber
		\end{align}
	for $ j = c, 1, 2, \dots, J $. With that defined, we can easily derive that for $ v \in V $ and $ v = y + z $, $ z \in \Ker(A_0)$ and $ y \in \Ker(A_0)^\perp $ it follows
	\begin{equation} \label{eq:direct-sum-kernel-perp}
		\| v \|_A^2 = \| y \|_A^2 + \| z \|_{A_1}^2.
	\end{equation}
	The next lemma shows a triangle inequality for the projections defined in \eqref{eq:projections}.
	\begin{lemma} \label{lemma:projections-triangle-ineq} The following inequality holds for all $ v \in V $ and $j = c, 1, 2, \dots, J$,
		\begin{equation}
			\|P_j v \|^2 \le \gamma
			\left| P_{0,j} v \right|_{A_0}^2 +
			\left\| P_{1,j} v \right\|_{A_1}^2.
		\end{equation}
	\end{lemma}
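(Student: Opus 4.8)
The plan is to read off the bound directly from the defining variational properties of the three projections, combined with two applications of the Cauchy--Schwarz inequality; here $\|P_j v\|^2$ is understood as $\|P_j v\|_A^2$. Write $p = P_j v$, $p_0 = P_{0,j} v$ and $p_1 = P_{1,j} v$. First I would use the $A$-orthogonality of $P_j$ onto $V_j$ to strip the projection from one factor: since $\langle A(v-p), w_j\rangle = 0$ for every $w_j \in V_j$ and since $p \in V_j$, we get $\|p\|_A^2 = \langle A p, p\rangle = \langle A v, p\rangle$. Expanding $A = \gamma A_0 + A_1$ then gives $\|p\|_A^2 = \gamma\langle A_0 v, p\rangle + \langle A_1 v, p\rangle$.

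Next I would transfer $v$ onto the appropriate projection in each term using the defining relations \eqref{eq:projections}. Because $p \in V_j$, choosing $w_j = p$ in the definition of $P_{1,j}$ yields $\langle A_1 v, p\rangle = \langle A_1 p_1, p\rangle$, and similarly the definition of $P_{0,j}$ gives $\langle A_0 v, p\rangle = \langle A_0 p_0, p\rangle$. Applying the Cauchy--Schwarz inequality to each term separately, valid for the symmetric positive semidefinite form $A_0$ as well as for the symmetric positive definite $A_1$, produces $\|p\|_A^2 \le \gamma\, |p_0|_{A_0}\,|p|_{A_0} + \|p_1\|_{A_1}\,\|p\|_{A_1}$.

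The decisive step is then a second, combined Cauchy--Schwarz inequality: viewing the right-hand side as the Euclidean inner product of the vectors $(\sqrt{\gamma}\,|p_0|_{A_0},\, \|p_1\|_{A_1})$ and $(\sqrt{\gamma}\,|p|_{A_0},\, \|p\|_{A_1})$, one bounds it by $\bigl(\gamma\,|p_0|_{A_0}^2 + \|p_1\|_{A_1}^2\bigr)^{1/2}\bigl(\gamma\,|p|_{A_0}^2 + \|p\|_{A_1}^2\bigr)^{1/2}$. The second factor equals $\|p\|_A$ by the very definition of the $A$-norm together with $A = \gamma A_0 + A_1$. Dividing through by $\|p\|_A$, the inequality being trivial when $p = 0$, gives $\|p\|_A \le \bigl(\gamma\,|p_0|_{A_0}^2 + \|p_1\|_{A_1}^2\bigr)^{1/2}$, which is the claim after squaring.

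The argument is short, and the only point requiring care, which is the main obstacle such as it is, is the legitimacy of replacing $v$ by $p_0$ in the $A_0$-term: the operator $P_{0,j}$ maps into $\Ker(A_0)^\perp \cap V_j$ rather than onto all of $V_j$. This is resolved by noting that its defining identity in \eqref{eq:projections} is tested against every $w_j \in V_j$, and the particular test function $p$ indeed lies in $V_j$; everything else is routine.
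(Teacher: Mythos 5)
Your proof is correct and follows essentially the same route as the paper: both rest on the identity $\|P_j v\|_A^2 = \gamma\langle A_0 P_{0,j}v, P_j v\rangle + \langle A_1 P_{1,j}v, P_j v\rangle$ obtained from the defining relations of the three projections tested against $P_j v \in V_j$. The only (immaterial) difference is in the closing step: the paper uses Young's inequality and absorbs $\tfrac12\|P_j v\|_A^2$ into the left-hand side, whereas you use a nested Cauchy--Schwarz argument and divide by $\|P_j v\|_A$.
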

	\begin{proof}
		Since $ P_j v \in V_j $, by the definitions of of the projections we have 
		\begin{align*}
			\| P_j v \|_A^2 
			& = \langle A P_j v, P_j v \rangle = \langle A v, P_j v \rangle = \gamma \langle A_0 v, P_j v \rangle + \langle A_1 v, P_j v \rangle \\
			& = \gamma \langle A_0 P_{0,j} v, P_j v \rangle + \langle A_1 P_{1,j} v, P_j v \rangle  \\
			& \le \frac{\gamma}{2} \left( \langle A_0 P_{0,j} v, P_{0,j} v \rangle + \langle A_0 P_j v, P_j v \rangle \right)
			+ \frac{1}{2} \left( \langle A_1 P_{1,j} v, P_{1,j} v \rangle + \langle A_1 P_j v, P_j v \rangle \right) \\
			& = \frac{\gamma}{2} \left| P_{0,j} v \right|_{A_0}^2 + \frac{1}{2} \| P_{1,j} v \|_{A_1}^2 
			+ \frac{1}{2} \| P_j v \|_A^2
		\end{align*}
		Moving the last term on the right to the left hand side finishes the proof. 
	\end{proof}
	
	Using the previous lemma and definitions, we are ready to present the condition number estimate in the following theorem. 
	\begin{theorem} \label{thm:condition-number}
		Let $ v = y + z \in V $ such that $ z \in \Ker(A_0) $ and $ y \in \Ker(A_0)^\perp $ that is the orthogonal complement with regards to $ A_1 $-inner product. Assuming the conditions \eqref{eq:kernel-condition} and\eqref{eq:stable-condition} hold, we get the estimate
		\begin{equation} \label{eq:cond-number-estimate}
		C_1 \leq\frac{\left\langle B^{-1} v, v \right\rangle}{\| v \|_A^2} \le C_2:= 2 \left[ C_{\perp}(v) + C_{0}(v) \right],
		\end{equation}
		where the constants $C_1$ only depends on the maximum over the number of intersections between the subspaces $ V_j $, $ j = 1, \dots, J $ and the constants $ C_{\perp}(v) $ and $ C_0(v) $ are
		\begin{equation*} %\label{eq:cond-number-constants}
			\begin{aligned}
				C_{\perp}(v) &= \inf_{y_c + \sum_{j = 1}^J y_j = y} \left(
				\frac{|y_c|^2_{A_0} + \sum_{j = 1}^J |y_j|^2_{A_0}}{|y|^2_{A_0}} +
				\frac{\|y_c\|^2_{A_1} + \sum_{j = 1}^J \|y_j\|^2_{A_1}}{\|y\|^2_{A_1}}
				\right),
				\\
				C_{0}(v) &=
				\inf_{z_c + \sum_{j = 1}^J z_j = z}\frac{\|z_c\|^2_{A_1} + \sum_{j = 1}^J \| z_j \|^2_{A_1}}{\| z \|^2_{A_1}}.
			\end{aligned}
		\end{equation*}
	This implies that the condition number estimate $\kappa(BA) \leq C_2/C_1$.
	\end{theorem}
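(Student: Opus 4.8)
The plan is to deduce the two-sided bound \eqref{eq:cond-number-estimate} from estimates on the scalar quantity $\langle B^{-1}v,v\rangle$, using the variational characterization \eqref{eq:preconditioner-c} of the additive preconditioner as the main tool. Since the $P_i$, $i\in\{c,1,\dots,J\}$, are $A$-orthogonal projections, one has $BA=P_c+\sum_{j=1}^J P_j$, and $BA$ is self-adjoint in the $A$-inner product; expanding $v$ in the $A$-orthogonal eigenbasis of $BA$ shows that $\frac{\langle B^{-1}v,v\rangle}{\|v\|_A^2}$ is a weighted average of the reciprocal eigenvalues and therefore lies between $1/\lambda_{\max}(BA)$ and $1/\lambda_{\min}(BA)$. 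Hence it suffices to bound these two extreme eigenvalues, and the condition number bound $\kappa(BA)\le C_2/C_1$ follows immediately from the two one-sided estimates. The identity \eqref{eq:preconditioner-c} is what makes the upper estimate tractable: any admissible splitting $v=v_c+\sum_j v_j$ furnishes an upper bound for $\langle B^{-1}v,v\rangle$, so for the upper bound I only need to exhibit one good splitting.

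For the lower constant $C_1$ I would invoke the standard finite-overlap (coloring) argument. Coloring the index set so that subspaces sharing a color have mutually $A$-orthogonal supports, each color class sums to an $A$-orthogonal projection of unit norm, whence $\lambda_{\max}(BA)=\lambda_{\max}\!\left(\sum_i P_i\right)\le N_0$, with $N_0$ the number of colors, i.e. controlled by the maximal number of subspaces $V_j$ that a single subspace meets. This gives $\langle B^{-1}v,v\rangle/\|v\|_A^2\ge 1/N_0=:C_1$, matching the claimed dependence of $C_1$ on the overlap alone.

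The upper constant $C_2$ is the substance of the theorem. Writing $v=y+z$ with $z\in\Ker(A_0)$ and $y\in\Ker(A_0)^\perp$, I would decompose $y$ and $z$ separately and recombine. For $z$, the kernel decomposition condition \eqref{eq:kernel-condition} yields a splitting $z=z_c+\sum_j z_j$ with each piece in $\Ker(A_0)\cap V_i$; the decisive point is $A_0 z_i=0$, so $\|z_i\|_A^2=\|z_i\|_{A_1}^2$ and the factor $\gamma$ disappears entirely on the kernel part, which is exactly what makes the bound robust as $\gamma\to\infty$. For $y$, near-optimizing the infimum defining $C_\perp(v)$ (which is feasible by the stable decomposition condition \eqref{eq:stable-condition}) supplies $y=y_c+\sum_j y_j$ controlling $\sum_i|y_i|_{A_0}^2$ and $\sum_i\|y_i\|_{A_1}^2$ simultaneously, so that $\sum_i\|y_i\|_A^2=\gamma\sum_i|y_i|_{A_0}^2+\sum_i\|y_i\|_{A_1}^2\le C_\perp(v)\,\|y\|_A^2$. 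Setting $v_i:=y_i+z_i$ gives an admissible splitting of $v$; the elementary bound $\|v_i\|_A^2\le 2\|y_i\|_A^2+2\|z_i\|_A^2$ (the origin of the constant $2$, obtained by the same Cauchy--Schwarz mechanism used in \Cref{lemma:projections-triangle-ineq}) and summation yield $\langle B^{-1}v,v\rangle\le 2C_\perp(v)\|y\|_A^2+2C_0(v)\|z\|_{A_1}^2$. Finally \eqref{eq:direct-sum-kernel-perp}, i.e. $\|v\|_A^2=\|y\|_A^2+\|z\|_{A_1}^2$, dominates both $\|y\|_A^2$ and $\|z\|_{A_1}^2$ by $\|v\|_A^2$, producing $C_2=2[C_\perp(v)+C_0(v)]$.

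The main obstacle, and the place where the hypotheses are genuinely needed, is the simultaneous control of the two seminorms on the kernel-complement part together with the insistence that the kernel part be split \emph{within} the kernel. Decomposing $y$ stably in $\|\cdot\|_{A_1}$ is routine multilevel theory, but doing so while keeping $\sum_i|y_i|_{A_0}^2\lesssim|y|_{A_0}^2$, so that the $\gamma$-weighted contribution does not degrade the estimate, is precisely the content of \eqref{eq:stable-condition}; verifying it for the concrete aggregation-based Schwarz smoothers is the crux and the step that ultimately makes $C_\perp(v)$ and $C_0(v)$, and hence $\kappa(BA)$, uniform in $\gamma$ and $h$. Symmetrically, splitting $z$ by a generic (non-kernel) decomposition would reintroduce the terms $\gamma|z_i|_{A_0}^2$ and destroy $\gamma$-robustness; the kernel decomposition condition \eqref{eq:kernel-condition} is exactly what rules this out, and its failure for pointwise smoothers (\Cref{remark:kernel-for-piecewise}) explains why the custom smoothers are indispensable.
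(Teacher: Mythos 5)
Your argument is correct and follows essentially the same route as the paper: the upper bound is obtained by splitting $v=y+z$, decomposing $z$ inside $\Ker(A_0)$ via \eqref{eq:kernel-condition} so that the $\gamma$-weighted term vanishes on the kernel part, decomposing $y$ via \eqref{eq:stable-condition}, recombining with the factor $2$, and invoking \eqref{eq:direct-sum-kernel-perp}. The only cosmetic differences are that you derive the lower bound by the coloring/eigenvalue argument where the paper uses a direct triangle inequality over an arbitrary decomposition, and that you replace the paper's detour through \Cref{lemma:projections-triangle-ineq} by the identity $\|y_j\|_A^2=\gamma|y_j|_{A_0}^2+\|y_j\|_{A_1}^2$ combined with the elementary fraction inequality, which is an equivalent (and arguably more direct) way to reach the same bound.
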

	
	\begin{proof}
		From the definition of $ B^{-1} $ in \eqref{eq:preconditioner-c}, if $ v \in V $ and $ v = y + z $ with
		$ y \in \Ker(A_0)^\perp $, $ z \in \Ker(A_0)$ we have that
		\begin{align*}
			\left\langle B^{-1} v, v \right\rangle
			& = \inf_{v_c + \sum_{j = 1}^J v_j=v} \left( \|v_c\|_A^2 + \sum_{j=1}^J \|v_j\|_A^2 \right) \\
               & \le 
			\inf_{y_c + \sum_{j = 1}^J y_j=y;\;z_c + \sum_{j = 1}^J z_j=z} \left( \|y_c + z_c \|_A^2 + 
  \sum_{j=1}^J \|y_j+z_j\|_A^2   \right)  \\
			&\le 2\inf_{y_c + \sum_{j = 1}^J y_j=y;\;z_c + \sum_{j = 1}^J z_j=z} \left( \|y_c\|^2_A + \|z_c\|^2_A +  \sum_{j=1}^J \left( \|y_j\|^2_A +
			\|z_j\|_A^2 \right) \right) \\
			& = 2 \inf_{y_c + \sum_{j = 1}^J y_j=y} \left( \|y_c \|^2_A + \sum_{j=1}^J \|y_j\|^2_A \right) +
			2 \inf_{z_c + \sum_{j = 1}^J z_j=z} \left( \|z_c\|^2_{A_1} + \sum_{j=1}^J \|z_j\|_{A_1}^2 \right).
		\end{align*}
		The first inequality above is a crucial inequality as it follows from: (1) the fact that the set of decompositions of $ v = y + z $ is larger than the set of decompositions of $ y $ and $ z $ (because any decomposition of $ y $ and $ z $ gives a decomposition of $ v $), and (2) the kernel decomposition assumption \eqref{eq:kernel-condition} without which we cannot have a decomposition of $ z = z_c + \sum_{j = 1}^J z_j $ with $ z_j \in \Ker(A_0) \cap V_j $, $j = c, 1, 2, \dots, J$. Then, by using \eqref{eq:direct-sum-kernel-perp} we see that 
		{\smalltonormalsize \begin{align*}
                \dfrac{\left\langle B^{-1} v,v\right\rangle}{\|v\|_A^2}
			= 
			\dfrac{\left\langle B^{-1} v,v\right\rangle}{\|y\|_{A}^2+\|z\|_{A_1}^2}
			& \le
			2\dfrac{ \displaystyle{ \inf_{z_c+\sum_{j=1}^J z_j=z}} \left( \|z_c\|^2_{A_1} + \sum_{j=1}^J \|z_j\|_{A_1}^2 \right)}{\|y\|_A^2+\|z\|^2_{A_1}}+
			2\dfrac{ \displaystyle{\inf_{y_c + \sum_{j = 1}^J y_j=y}} \left( \|y_c\|^2_A + \sum_{j=1}^J \|y_j\|_{A}^2 \right)}{\|y\|_{A}^2+\|z\|_{A_1}^2}\\
			& \le
			2\dfrac{ \displaystyle{\inf_{z_c+\sum_{j=1}^J z_j=z}} \left( \|z_c\|^2_{A_1} + \sum_{j=1}^J \|z_j\|^2_{A_1} \right)}{\|z\|^2_{A_1}}+
			2\dfrac{ \displaystyle{\inf_{y_c + \sum_{j = 1}^J y_j=y}} \left( \|y_c\|^2_A + \sum_{j=1}^J \|y_j\|^2_A \right)}{\gamma|y|^2_{A_0}+\|y\|_{A_1}^2}\\
			& \le
			2C_0(v) + 2\inf_{y_c + \sum_{j=1}^J y_j=y}\dfrac{ \|y_c\|^2_A + \sum\limits_{j=1}^J \|y_j\|^2_A}{\gamma|y|^2_{A_0}+\|y\|_{A_1}^2}.
		\end{align*}
            }%
		Notice that, since $y_j\in V_j$, we have $y_j=P_{j} y_j=P_{1,j}y_j$. We now introduce the following elementary inequality for $t_1, t_2 > 0$ and $s_1, s_2 >0$:
		\begin{equation*}
			\frac{\gamma t_1+s_1}{\gamma t_2 + s_2} =    
			\frac{\gamma t_1}{\gamma t_2 + s_2} +
			\frac{s_1}{\gamma t_2 + s_2} \le 
			\frac{t_1}{t_2} + \frac{s_1}{s_2}.
		\end{equation*}
		With this in hand, it follows from \Cref{lemma:projections-triangle-ineq} that:
		\begin{align*}
			\dfrac{\|y_c\|^2_A+\sum_{j=1}^J \|y_j\|^2_A}{\gamma|y|^2_{A_0}+\|y\|_{A_1}^2}
			& = \frac{\|P_c y_c\|^2_A+\sum_{j=1}^J \|P_j y_j\|^2_A}{\gamma|y|^2_{A_0}+\|y\|_{A_1}^2}\\
			& \le
			\dfrac{\gamma \left( |P_{0,c}y_c|^2_{A_0} + \sum_{j=1}^J |P_{0,j}y_j|^2_{A_0} \right) + \left( \|P_{1,c}y_c\|^2_{A_1} + \sum_{j=1}^J \|P_{1,j}y_j\|^2_{A_1} \right)}{\gamma|y|^2_{A_0}+\|y\|_{A_1}^2}\\
			& \le \dfrac{|P_{0,c}y_c|^2_{A_0} +\sum_{j=1}^J |P_{0,j}y_j|^2_{A_0}}{|y|^2_{A_0}}
			+\dfrac{ \|y_c\|^2_{A_1}+\sum_{j=1}^J \|y_j\|^2_{A_1}}{\|y\|_{A_1}^2}\\
			& \le \frac{|y_c|^2_{A_0}+\sum_{j=1}^J |y_j|^2_{A_0}}{|y|^2_{A_0}}
			+\dfrac{\|y_c\|^2_{A_1}+\sum_{j=1}^J \|y_j\|^2_{A_1}}{\|y\|_{A_1}^2}.
		\end{align*}
		The upper bound in \eqref{eq:cond-number-estimate} is obtained by taking infimum over all decompositions of $y \in \Ker(A_0)^\perp$ on the right hand side.  
		To show the lower bound in \eqref{eq:cond-number-estimate}, note that for any decomposition, we have
		\begin{align*}
		\| v \|_A^2 = \| v_c + \sum_{j=1}^J v_j \|_A^2 \leq 2 \| v_c \|_A^2 + 2 \| \sum_{j=1}^J v_j \|_A^2 \leq 2 \| v_c \|_A^2 + 2 \sum_{j=1}^J \| v_j \|_A^2.
		\end{align*} 
        Therefore, taking the infimum over all possible decompositions, we can obtain the lower bound and conclude the proof. 
	\end{proof}
	
	Now we have results on our preconditioning method's uniform condition number estimation. In the following two subsections, we show that the assumptions  \eqref{eq:kernel-condition} and \eqref{eq:stable-condition} are valid in the context of the algebraic systems that we are considering.

	% ------------------------------------------------ %
	
	\subsection{Kernel decomposition condition} \label{sec:kernel-condition}
	
	We continue with defining the subspace splitting that will satisfy the kernel condition in \eqref{eq:kernel-condition}. At the same time, we bear in mind to choose a decomposition that intuitively follows the \eqref{eq:stable-condition} as well.
	
	Consider characterizing the kernel of the matrix $ A_0 $ as
	\begin{align} \label{eq:kernel-writeup}
		\Ker(A_0) 
		& = \{ v = (v_\Omega, v_\Gamma) \in V \, : \, a_0(v, v) = m_\Gamma(Rv, Rv) = 0 \} \nonumber \\
		& = \{ v = (v_\Omega, v_\Gamma) \in V \, : \, v_\Gamma = \sigma(v_\Omega)  \} \nonumber \\
		& = \left\{ \begin{pmatrix} I_\Omega \\ \sigma \end{pmatrix} v_\Omega  \, : \, v_\Omega \in V_\Omega \right\}.
	\end{align}
	Therefore, we can fully represent $ \Ker(A_0) $ with the vectors from the subspace $ V_\Omega $. % that span the operator $ \begin{pmatrix} I_\Omega \\ \sigma \end{pmatrix} $. 
	We take this into account when constructing the subspaces $ V_j \subset V, \, V_c+\sum\limits_{j = 1}^J V_j = V $. % that should satisfy \eqref{eq:kernel-condition}. 
	Descriptively, we should find a partition of $ V $ so that each part contains at least one spanning vector of $ \Ker(A_0) $ with a minimal overlap between the subspaces. Note that we first start with the subspaces $ V_j, \, j \geq 1 $ that define the Schwarz preconditioner $ S $ and then we construct the coarse space $ V_c $ via vertex aggregation since we consider aggregation-based AMG.
	
	For each graph vertex $ j \in \mathcal{V}_\Omega $, define the neighborhood of $ j $ in terms of the sparsity pattern of the operator $ R $, that is
	\begin{equation}\label{eq:neighborhoods}
		\mathcal{N}_j = \{ i \in \mathcal{V}_\Gamma \, : \, (\sigma(e_j^\Omega))_i \neq 0, \; e_j^{\Omega} \in V_\Omega \}, \quad 
		\text{where} \ (e_j^\Omega)_k = \begin{cases}
			1, \quad k = j \\
			0, \quad k \neq j
		\end{cases}
		\quad j \in \mathcal{V}_\Omega.
	\end{equation}
	Specifically, the neighborhoods $ \mathcal{N}_j $ are the subsets of all the vertices in $ \mathcal{V}_\Gamma $ that the vertex $ j \in \mathcal{V}_\Omega $ restricts to in terms of $ \sigma $. Note that $ \mathcal{N}_j = \emptyset $ if $ \sigma(e_j^\Omega) = 0 $, which means that that vertex $ j \in \mathcal{V}_\Omega $ does not connect to any vertex in $ \mathcal{V}_\Gamma $ by the action of the operator $ \sigma $.
	Since $ \sigma $ is surjective, then we have that 
 \begin{equation}\label{eq:index-split} 
 \bigcup_{j = 1}^{N_\Omega} \mathcal{N}_j = \mathcal{V}_\Gamma\quad\mbox{and}\quad \bigcup_{j = 1}^{N_\Omega} (\mathcal{N}_j \cup \{ j \}) = \mathcal{V}_\Gamma \cup \mathcal{V}_\Omega = \mathcal{V}.
 \end{equation} 
 % Here, $\mathcal{V}$ is the set of all vertices in the graph. 
 Hence, we have constructed a partition of the vertices of the graph in overlapping subsets and we use this below to define the partition of unity needed in the analysis of a Schwarz smoother.

 We now consider the subspaces $V_j\subset V$ defined as follows 
	\begin{equation}\label{eq:space-decomposition}
		V_j = \Span \left( \left\{ \begin{pmatrix} e^\Omega_j \\ 0 \end{pmatrix} \right\} \cup \left\{ \begin{pmatrix} 0 \\ e^\Gamma_i \end{pmatrix}, \, i \in  \mathcal{N}_j \right\} \right) \subset V,  \qquad j \in \mathcal{V}_\Omega.
	\end{equation}
	Since $ \bigcup_{j = 1}^{N_\Omega} (\mathcal{N}_j \cup \{ j \}) = \mathcal{V} $, it is straightforward to see that $ \sum\limits_{j = 1}^J V_j = V $. It also follows that the number of subspaces is $ J \leq N_\Omega + 1 $ and if any $ \mathcal{N}_j = \emptyset $ then $ V_j = \Span \left\{ \begin{pmatrix} e_j^\Omega \\ 0 \end{pmatrix} \right\} $. More precisely, that means that outside the domain of influence of the restriction operator $ \sigma $, the subspaces are defined only on the support of the local finite element function of that degree of freedom. In turn, that means that "around" $\Gamma$ we will have an overlapping Schwarz method as the smoother, while in the rest of the domain, the subspaces define a standard pointwise smoother (Jacobi or Gauss-Seidel method).
	
	Next, we define the coarse space $ V_c $ given by the UA-AMG method. Other constructions of coarse spaces are also possible, but we choose UA-AMG because the analysis in this case is more transparent and concise. The aggregates are constructed from the set of vertices $ \mathcal{V} = \{ 1, 2, \dots, N \} $ as follows:
	\begin{subequations} \label{eq:aggregation}
		\begin{align}
			&\text{ Splitting: } \{1,\ldots,N\}
			= \bigcup_{k = 1}^{n_{agg}} \mathfrak{a}_k, \quad \mathfrak{a}_l \cap \mathfrak{a}_k = \emptyset, \; \text{ when } l \neq k, \quad | \mathfrak{a}_k | \le C_{agg}, \quad k = 1, \dots, n_{agg}, \\
			&\text{ Approximation: for } v \in V,  \,
			v \approx v_c = \sum_{k = 1}^{n_{agg}} v_{\mathfrak{a}_k}, \, \text{ where }
			v_{\mathfrak{a}_k} = \frac{\langle \mathbbm{1}_{\mathfrak{a}_k}, v \rangle_{\ell^2} }{ |\mathfrak{a}_k| } \mathbbm{1}_{\mathfrak{a}_k}.
		\end{align}    
	\end{subequations}
	where $ \mathbbm{1}_{\mathfrak{a}_k} \in \mathbb{R}^N $ is the indicator vector on every $\mathfrak{a}_k $, $ |\mathfrak{a}_k| $ is the size of each aggregate, $ n_{agg} $ is the total number of aggregates (number of coarse grid degrees of freedom) and $ C_{agg} $ is the maximal number of fine grid vertices in any aggregate.  
 
 Associated with the splitting of the vertices given in~\eqref{eq:index-split} we now introduce a partition of unity. Consider the following matrices, each associated with the support of the vector from the frame of $ \Ker(A_0) $,
	\begin{equation}
		\chi_j = D_{\Omega}^{-1} \operatorname{diag} (\mathbbm{1}_{\mathcal{N}_j \cup \{ j \}}) \; \text { where } \; D_{\Omega} = \sum_{j = 1}^J \operatorname{diag} (\mathbbm{1}_{\mathcal{N}_j \cup \{ j \} }), \quad j = 1, \dots, J,
	\end{equation}
	where $ \mathbbm{1}_{\mathcal{N}_j \cup \{ j \}} $ being the indicator vectors on a subset of vertices $ \mathcal{N}_j \cup \{ j \} \subset \mathcal{V} $. Clearly, the matrices $ \chi_j \in \mathbb{R}^{N \times N}$ and $ \sum_{j = 1}^J \chi_j = I$. The latter identity just means that $\{\chi_j\}_{j=1}^J$ form a partition of unity. 
 In addition, we have that $ \chi_j\chi_k=D_{\Omega} \chi_j(\chi_j\chi_k) $ and $ D_{\Omega} \chi_j(\chi_j-\chi_k)\chi_k	$ is positive semidefinite. 
	
	Finally, the full subspace decomposition is given as following: for $ v \in V $, 
	%and $ v_0 = \sum\limits_{k = 1}^{n_{agg}} v_{\mathfrak{a}_k} $ from \eqref{eq:aggregation} we set
	\begin{equation} \label{eq:decomposition}
		v = v_c+\sum_{j = 1}^J v_j, \ \text{where} \ v_c = \sum\limits_{k = 1}^{n_{agg}} v_{\mathfrak{a}_k} \ \text{and} \  v_j = \chi_j(v-v_c), \ \ j = 1, \cdots, J.
	\end{equation}
	
	Based on the definitions of the kernel $ \Ker(A_0) $ and the subspaces, we can verify that the space decomposition \eqref{eq:space-decomposition} satisfies the kernel decomposition condition \eqref{eq:kernel-condition}, which is summarized in the following proposition.
	\begin{proposition} \label{prop:kernel-proof}
		Let $ V_j, \, j = c, 1, 2, \dots, J $ be the subspaces of $ V $ defined in \eqref{eq:space-decomposition} and \eqref{eq:aggregation}. Then $\{ V_c \} \cup \{ V_j \}_{j=0}^J $ satisfy the kernel decomposition condition \eqref{eq:kernel-condition}.
	\end{proposition}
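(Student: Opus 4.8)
The plan is to exhibit an explicit spanning set (a frame) of $\Ker(A_0)$ in which every vector already lies in one of the fine subspaces $V_j$ from \eqref{eq:space-decomposition}; the decomposition \eqref{eq:kernel-condition} then follows immediately, with the coarse space $V_c$ playing no essential role. Indeed, the subspaces in \eqref{eq:space-decomposition} were designed precisely so that each generator of the kernel is captured by a single $V_j$, so the argument should be short and essentially constructive.

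First I would recall from the characterization \eqref{eq:kernel-writeup} that $\Ker(A_0)=\Range\begin{pmatrix} I_\Omega \\ \sigma \end{pmatrix}$, so that, writing $v_\Omega=\sum_{j\in\mathcal{V}_\Omega} a_j e_j^\Omega$ and using linearity of $\sigma$, every kernel element is a linear combination of the frame vectors
\[
  w_j := \begin{pmatrix} e_j^\Omega \\ \sigma(e_j^\Omega) \end{pmatrix}, \qquad j\in\mathcal{V}_\Omega .
\]
Since $\{e_j^\Omega\}_{j\in\mathcal{V}_\Omega}$ is a basis of $V_\Omega$, this gives $\Ker(A_0)=\Span\{w_j : j\in\mathcal{V}_\Omega\}$.

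The key step is to show $w_j\in V_j$ for each $j$. By linearity, $\sigma(e_j^\Omega)=\sum_{i} (\sigma(e_j^\Omega))_i\, e_i^\Gamma$, and by the very definition of the neighborhood $\mathcal{N}_j$ in \eqref{eq:neighborhoods} the only nonzero coefficients occur for $i\in\mathcal{N}_j$. Hence
\[
  w_j = \begin{pmatrix} e_j^\Omega \\ 0 \end{pmatrix} + \sum_{i\in\mathcal{N}_j} (\sigma(e_j^\Omega))_i \begin{pmatrix} 0 \\ e_i^\Gamma \end{pmatrix}
\]
is a linear combination of exactly the generators of $V_j$ listed in \eqref{eq:space-decomposition}, so $w_j\in V_j$; as trivially $w_j\in\Ker(A_0)$, we obtain $w_j\in \Ker(A_0)\cap V_j$ (the degenerate case $\mathcal{N}_j=\emptyset$, i.e. $\sigma(e_j^\Omega)=0$, is included, with $w_j=(e_j^\Omega,0)$). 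Consequently $\Ker(A_0)=\Span\{w_j\}\subseteq\sum_{j=1}^J \Ker(A_0)\cap V_j$, which together with the trivial reverse inclusion $\Ker(A_0)\cap V_c+\sum_{j=1}^J\Ker(A_0)\cap V_j\subseteq\Ker(A_0)$ establishes \eqref{eq:kernel-condition}.

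I do not expect a genuine obstacle: the only point requiring care is matching the support of $\sigma(e_j^\Omega)$ with the index set $\mathcal{N}_j$, which holds by construction in \eqref{eq:neighborhoods}. The substantive work lay in designing the splitting so that each kernel frame vector sits in a single subspace; once that is arranged, $V_c$ is irrelevant to the kernel condition and is needed only for the $h$-uniform stable decomposition \eqref{eq:stable-condition}. If a sharper statement is desired, one can upgrade the inclusion to the identity $\Ker(A_0)\cap V_j=\Span\{w_j\}$ by observing that any element of $V_j$ has $\Omega$-component a multiple of $e_j^\Omega$, whose membership in the kernel then forces the $\Gamma$-component to be the corresponding multiple of $\sigma(e_j^\Omega)$.
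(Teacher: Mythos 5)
Your proof is correct and follows essentially the same route as the paper: both expand an arbitrary kernel element over the frame indexed by $\mathcal{V}_\Omega$ and observe that each frame vector lies in $\Ker(A_0)\cap V_j$ by the very definition of $\mathcal{N}_j$ in \eqref{eq:neighborhoods}. If anything, your version is slightly more careful, since you keep the general coefficients $(\sigma(e_j^\Omega))_i$ on the $\Gamma$-components, whereas the paper writes the frame vectors as indicator vectors $\mathbbm{1}_{\mathcal{N}_j\cup\{j\}}$, which implicitly assumes the nonzero entries of $\sigma(e_j^\Omega)$ all equal one (true for trace/restriction operators but not for a general averaging $\sigma$).
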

	\begin{proof}
		By definition, we have that $ \Ker(A_0) \cap V_c \subseteq \Ker(A_0)$ and $ \sum\limits_{j = 1}^{J} \Ker(A_0) \cap V_j \subseteq \Ker(A_0) $. On the other hand, for any $ v \in \Ker(A_0) $ we know that $ v = \begin{pmatrix} I_\Omega \\ \sigma \end{pmatrix} v_\Omega $ for some $ v_\Omega \in V_\Omega $.  Equivalently, the column vectors $ \{ \mathbbm{1}_{\mathcal{N}_j \cup \{ j \}} \}_{j = 1}^{N_\Omega} $ span $ \Ker(A_0) $, which can be expanded as follows
		\begin{equation}\label{key}
			v = \sum\limits_{j = 1}^{N_\Omega} (v_\Omega)_j \underbrace{ \mathbbm{1}_{\mathcal{N}_j \cup \{ j \} } }_{\in \Ker(A_0)}
			= \sum\limits_{j = 1}^{N_\Omega} \underbrace{ (v_\Omega)_j 
				\underbrace{ \left[ 
				 \begin{pmatrix} e^\Omega_j \\ 0 \end{pmatrix} + \sum\limits_{i \in \mathcal{N}_j} \begin{pmatrix} 0 \\ e^\Gamma_i \end{pmatrix} 
				\right]
				}_{\in V_j}
			}_{\in \Ker(A_0) \cap V_j}
			\in \sum\limits_{j = 1}^{N_\Omega} \Ker(A_0) \cap V_j
			\subseteq \sum\limits_{j = 1}^{J} \Ker(A_0) \cap V_j.
		\end{equation}
	\end{proof}

	% ------------------------------------------------ %

	\subsection{Stable decomposition condition} \label{sec:stable-condition}
	Now that we have shown the kernel decomposition condition, we show that the same subspace decomposition is also stable in both $ A_0 $- and $ A_1 $-inner products. 
	
	\subsubsection{Stability condition in $ A_1 $} \label{sec:stability-a1} 
	We first focus on $ A_1 $ and the coarse space estimates, and present the following immediate result on the stability estimates of the coarse space $ V_c $.
	
	\begin{lemma} \label{lemma:stability-approximation} 
		For $ v \in V $ and its coarse grid approximation $ v_c = \sum\limits_{k = 1}^{n_{agg}} v_{\mathfrak{a}_k} $ we have
		\begin{equation}\label{e:s-a}
			\| v - v_c \|_{D_1}^2 \lesssim \| v \|_{A_1}^2 \;
			\text{ and } \; \| v_c \|_{A_1}^2 \lesssim \|v\|_{A_1}^2,
		\end{equation}
		with $ D_1 $ is the diagonal of $ A_1 $.
	\end{lemma}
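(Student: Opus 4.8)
The plan is to reduce everything to a weighted graph Laplacian and then localize over aggregates. By item~1 of \Cref{sec:preliminaries}, the elliptic operator $A_1$ is spectrally equivalent to an associated weighted graph Laplacian $L$, and $D_1$ is equivalent to the diagonal of $L$; since all these equivalence constants are mesh-independent, it suffices to prove both estimates with $A_1$ replaced by $L$ and $D_1$ by $\diag(L)$. Under the simplification $V_\Gamma=V_\Upsilon$ of \Cref{remark:about-R} we have $A_1=\diag(A_\Omega,A_\Gamma)$, so the graph of $L$ carries no edges between $\mathcal{V}_\Omega$ and $\mathcal{V}_\Gamma$ and both norms split as independent sums over the two blocks; I will argue on a single block and add. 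Finally, because $D_1$ is diagonal and the aggregates $\{\mathfrak{a}_k\}$ partition $\mathcal{V}$ (see \eqref{eq:aggregation}), the left-hand side of the first estimate localizes as $\|v-v_c\|_{D_1}^2=\sum_k\|(v-v_c)|_{\mathfrak{a}_k}\|_{D_1}^2$.

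For the first estimate I would work aggregate by aggregate. On $\mathfrak{a}_k$ the coarse value $v_{\mathfrak{a}_k}$ is, by \eqref{eq:aggregation}, the $\ell^2$-orthogonal best constant approximation of $v|_{\mathfrak{a}_k}$. On a quasi-uniform mesh the diagonal weights of $L$ are all comparable within a single aggregate (they behave like $h^{d-2}$ up to shape-regularity constants), so the $\ell^2$-average is comparable to the weighted best constant and hence
\[
\|v|_{\mathfrak{a}_k}-v_{\mathfrak{a}_k}\mathbbm{1}\|_{\widetilde{D}_k}^2 \lesssim \inf_{c\in\RR}\|v|_{\mathfrak{a}_k}-c\,\mathbbm{1}\|_{\widetilde{D}_k}^2 \lesssim \|v|_{\mathfrak{a}_k}\|_{L_k}^2,
\]
where $L_k$ is the Laplacian of the subgraph induced by $\mathfrak{a}_k$ and $\widetilde{D}_k$ its diagonal, and the last inequality is the local graph Poincar\'e inequality \eqref{eq:graph-poincare}. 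Its constant is uniform because $|\mathfrak{a}_k|\le C_{agg}$ bounds the number of vertices in each aggregate. Summing over $k$ and using that discarding the inter-aggregate edges (all of nonnegative weight) only decreases the Dirichlet energy gives $\sum_k\|v|_{\mathfrak{a}_k}\|_{L_k}^2\le\|v\|_L^2\eqsim\|v\|_{A_1}^2$, which yields the first bound.

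The second estimate follows by a triangle inequality combined with \eqref{eq:A-le-D}: writing $v_c=v-(v-v_c)$,
\[
\|v_c\|_{A_1}^2 \lesssim \|v\|_{A_1}^2+\|v-v_c\|_{A_1}^2 \lesssim \|v\|_{A_1}^2+\|v-v_c\|_{D_1}^2 \lesssim \|v\|_{A_1}^2,
\]
where the middle step uses $\|w\|_{A_1}^2\lesssim\|w\|_{D_1}^2$ from \eqref{eq:A-le-D} and the last step is the first estimate just proved. The main obstacle is the local step on each aggregate: one must ensure both the uniformity of the Poincar\'e constant and the comparability of the $\ell^2$-average with the weighted best constant. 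Both hinge on quasi-uniformity together with the a priori bound $|\mathfrak{a}_k|\le C_{agg}$ on aggregate sizes, while the block-diagonal structure of $A_1$ is what lets these one-block arguments be assembled across the $\Omega$- and $\Gamma$-parts of an aggregate without interaction.
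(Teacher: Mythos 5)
Your proof is correct and follows essentially the same route as the paper's: localize $\|v-v_c\|_{D_1}^2$ over the disjoint aggregates, apply the local graph Poincar\'e inequality \eqref{eq:graph-poincare} on each $\mathfrak{a}_k$ (with constants controlled by $|\mathfrak{a}_k|\le C_{agg}$ and shape regularity), sum the local energies into $\|v\|_{A_1}^2$, and then get the second bound from the triangle inequality together with \eqref{eq:A-le-D}. You merely spell out details the paper leaves implicit (the comparability of the unweighted $\ell^2$-average with the weighted best constant, and that dropping inter-aggregate edges only decreases the Dirichlet energy), which are the right supporting facts.
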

	\begin{proof}
		The first estimate follows from Poincar\'{e} inequality \eqref{eq:graph-poincare} on each aggregate
		$ \mathfrak{a}_k $, $ k = 1, \dots, n_{agg} $, i.e.,
		\begin{equation} \label{eq:proof-lemma-stab-a}
			\|v - v_c \|_{D_1}^2 = \sum\limits_{k = 1}^{n_{agg}} \| v - v_{\mathfrak{a}_k} \|_{D_1, \mathfrak{a}_k}^2 
			\lesssim \sum_{\mathfrak{a}_k} c_{\mathfrak{a}_k} \| v \|_{A_1, \mathfrak{a}_k}^2 
			\lesssim \left(\max_{\mathfrak{a}_k} c_{\mathfrak{a}_k} \right) \| v \|_{A_1}^2,
		\end{equation}
		where $ c_{\mathfrak{a}_k} $ are the Poincar\'{e} constants on each aggregate $ \mathfrak{a}_k $ cf.~\cite{1989JerrumM_SinclairA-ab}, \cite{2017XuZikatanov-aa}. In addition, $\| \cdot \|_{D_1, \mathfrak{a}_k}$ and $\| \cdot \|_{A_1, \mathfrak{a}_k}$ denote the $D_1$- and $A_1$-norms restricted to the aggregate $\mathfrak{a}_k$, respectively. 
		
		 \noindent The second estimate follows from the triangle inequality, \eqref{eq:A-le-D}, and \eqref{eq:proof-lemma-stab-a}, namely,
		\begin{equation*}
			\| v_c \|_{A_1}^2 
			\lesssim \| v \|_{A_1}^2 + \| v - v_c \|_{A_1}^2
			\lesssim  \| v \|_{A_1}^2 + \| v - v_c \|_{D_1}^2 
			\lesssim \| v \|_{A_1}^2.
		\end{equation*}    
	\end{proof}

	Next we consider estimates on the specific subspaces used for the Schwarz smoother \eqref{eq:preconditioner-b} (with using $A_1$ instead of $A$), which is defined by taking the infimum over all possible decompositions of any fine scale function $v \in V$ based on the space decomposition $ V = V_c+\sum\limits_{j = 1}^{J} V_j$.  We want to show the stability condition in $ A_1 $-norm, that is, choosing the decomposition \eqref{eq:decomposition} we achieve a uniform bound on that infimum. %Namely, we show that the uniform bound is achieved for the following choice of subspace corrections: for $ v \in V $ and $ v_0 = \sum\limits_{k = 1}^{n_{agg}} v_{\mathfrak{a}_k} $ from \eqref{eq:aggregation}, set
	\begin{proposition} \label{prop:stable-A1}
		For any $ v \in V $, let $ \{ v_c \} \cup \{ v_j \}_{j = 1}^J $ be the subspace decomposition defined in \eqref{eq:decomposition}. Then,
		\begin{equation} \label{eq:stability-A1}
			 \| v_c \|^2_{A_1} + \sum_{j = 1}^J \| v_j \|^2_{A_1} \lesssim \| v \|^2_{A_1}.
		\end{equation}
		% that is, the decomposition is stable in $ A_1 $-norm. 
  The stability constants hidden in~"$\lesssim$" depend on the maximal number of nonzeroes per row in $ A_1 $, the maximum of the local Poincar\'{e} constants of each aggregate $ \mathfrak{a}_k $, $ k = 1, \dots, n_{agg} $, and the maximum over the number of intersections between the subspaces $ V_j $, $ j = 1, \dots, J $.
	\end{proposition}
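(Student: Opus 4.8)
The plan is to separate the coarse contribution from the smoother contribution, and to reduce the latter to the \emph{diagonal} ($D_1$-) norm of the fluctuation $w := v - v_c$, which is precisely the quantity controlled by \Cref{lemma:stability-approximation}. The coarse term is immediate: \Cref{lemma:stability-approximation} already gives $\|v_c\|_{A_1}^2 \lesssim \|v\|_{A_1}^2$, so the entire task is to bound $\sum_{j=1}^J \|v_j\|_{A_1}^2$ with $v_j = \chi_j w$. I would first pass to the graph-Laplacian representation \eqref{eq:graph-bilinear}, writing $\|v_j\|_{A_1}^2 \eqsim \sum_{e} \omega_e\,(\delta_e(\chi_j w))^2$ with constants depending only on the number of nonzeroes per row of $A_1$. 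The decisive structural fact is that, under the standing simplification $V_\Upsilon = V_\Gamma$ (cf.\ \Cref{remark:about-R}), the form $a_1$ reduces to $a_\Omega + a_\Gamma$ and is block diagonal, so the edge set of $\mathcal{G}(A_1)$ splits as $\mathcal{E}_\Omega \cup \mathcal{E}_\Gamma$ with no edge joining $\mathcal{V}_\Omega$ to $\mathcal{V}_\Gamma$. I will treat these two edge families separately.

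The core computation is a discrete Leibniz rule. Writing $\theta_{j,p}$ for the $p$-th diagonal entry of $\chi_j$, so that $\theta_{j,p} = d_p^{-1}\mathbbm{1}_{p \in \mathcal{N}_j \cup \{ j \}}$ with $d_p$ the number of subspaces $V_j$ containing the vertex $p$, I split, for an edge $e = (p,q)$,
\[
\delta_e(\chi_j w) = \theta_{j,p}\,\delta_e w + (\theta_{j,p}-\theta_{j,q})\, w_q,
\]
and apply Young's inequality. Summing over $j$ and using the partition-of-unity identities $\sum_j \theta_{j,p}^2 = d_p^{-1} \le 1$ together with $\sum_j (\theta_{j,p}-\theta_{j,q})^2 \le 2(d_p^{-1}+d_q^{-1}) \le 4$ collapses the dependence on $j$ into $O(1)$ constants, governed by how many subspaces meet a given vertex, i.e.\ by the maximal intersection number of the $V_j$.

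For $\Gamma$-edges this produces a genuine energy part $\lesssim \|w_\Gamma\|_{A_\Gamma}^2$ plus a diagonal part $\lesssim \|w_\Gamma\|_{D_1}^2$. For $\Omega$-edges, where each $\Omega$-vertex lies in a single subspace (so $\theta_{j,p} = \mathbbm{1}_{j=p}$ and $d_p = 1$), the energy part disappears and only a diagonal part $\lesssim \|w_\Omega\|_{D_1}^2$ survives. Collecting both families gives $\sum_{j=1}^J \|v_j\|_{A_1}^2 \lesssim \|w\|_{A_1}^2 + \|w\|_{D_1}^2$. To close the argument I would absorb the energy term into the diagonal one via \eqref{eq:A-le-D}, namely $\|w\|_{A_1}^2 \lesssim \|w\|_{D_1}^2$, and then invoke \Cref{lemma:stability-approximation} in the form $\|w\|_{D_1}^2 = \|v - v_c\|_{D_1}^2 \lesssim \|v\|_{A_1}^2$; combined with the coarse bound this yields \eqref{eq:stability-A1}.

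The main obstacle, and the conceptual heart of the proof, is that the partition-of-unity ``gradient'' terms $(\theta_{j,p}-\theta_{j,q})w_q$ generate $L^2$/diagonal contributions rather than energy contributions, so a naive estimate would not be controllable by $\|v\|_{A_1}^2$ alone. The estimate only closes because these contributions are measured against the fluctuation $w = v - v_c$, whose diagonal norm is tamed by the local Poincar\'{e} inequality \eqref{eq:graph-poincare} built into \Cref{lemma:stability-approximation}. The remaining work is pure bookkeeping: verifying the overlap sums and keeping track that $\Omega$-vertices carry a degenerate (pointwise) partition of unity while the $\Gamma$-vertices around the interface carry a genuinely overlapping one.
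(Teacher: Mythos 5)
Your proof is correct and follows essentially the same route as the paper: both reduce the smoother sum to $\|v - v_c\|_{D_1}^2$ and close with \Cref{lemma:stability-approximation} together with the coarse bound $\|v_c\|_{A_1}^2 \lesssim \|v\|_{A_1}^2$. The paper short-circuits your discrete Leibniz computation by applying \eqref{eq:A-le-D} to each $\chi_j(v-v_c)$ \emph{before} summing over $j$ --- since the $\chi_j$ are diagonal with entries at most one and bounded overlap, $\sum_{j}\|\chi_j(v-v_c)\|_{D_1}^2 \lesssim \|v-v_c\|_{D_1}^2$ follows immediately, so your energy/diagonal splitting of $\delta_e(\chi_j w)$, while valid, is an unnecessary detour.
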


	\begin{proof}
		Using \eqref{eq:A-le-D}, the definition of $\chi_j$, and \Cref{lemma:stability-approximation}
  % and the Poincar\'{e} inequality on each aggregate $ \mathfrak{a}_k $, $ k = 1, \dots, n_{agg} $, 
  we obtain that
		\begin{align*}
			\| v_c \|^2_{A_1} + \sum_{j = 1}^J \| v_j \|^2_{A_1} & = \| v_c \|^2_{A_1} + \sum_{j = 1}^J \| \chi_j(v - v_c) \|^2_{A_1} \\
			& \lesssim \| v_c \|^2_{A_1} + \sum_{j = 1}^J \| \chi_j(v - v_c) \|^2_{D_1}~~~~~~~~~~~~~~~~\mbox{(from \eqref{eq:A-le-D})}\\
			& \lesssim \| v_c \|^2_{A_1} + \| v - v_c \|^2_{D_1}~~~~~~~~~~~~~~~~~~~~~~~~~~\mbox{(by the definition of $\chi_j$)}\\
			& \lesssim \| v \|^2_{A_1}~~~~~~~~~~~~~~~~~~~~~~~~~~~~~~~~~~~~~~~~~~~~~~\mbox{(from \Cref{lemma:stability-approximation})}.
		\end{align*}
		%
	 %This concludes the proof.  
	\end{proof}

	\subsubsection{Stability condition in $ A_0 $} \label{sec:stability-a0}
	Finally, to prove the stability of the decomposition in the $ A_0 $-inner product, it is necessary to specify some properties of the lower-order term $ a_0(\cdot, \cdot) $. While it slightly limits the applicability of our approach, the problems we are considering in \Cref{sec:implementation} adhere to the required assumptions.

	Let $ L : \mathcal{V}_\Gamma \to \mathcal{V}_\Omega $ be a metric function on the vertices of the graph such that
	\begin{equation} \label{eq:def-metric-graph}
		L(i) = \arg\min \{ \dist(i, j), \; j \in \mathcal{V}_\Omega \}, \quad i \in \mathcal{V}_\Gamma.
	\end{equation}
	The metric $ \dist(\cdot, \cdot) $ can be any metric between the graph vertices in $ \mathcal{V}_\Gamma $ and $ \mathcal{V}_\Omega $. For example, in reduced EMI example, it can be $\dist(i,j) := \| p_i-p_j \| $ where $p_i$ and $p_j$ are the spatial locations of the vertex $i \in \mathcal{V}_{\Gamma}$ and $j \in \mathcal{V}_{\Omega}$, respectively. 
	%relate to the spatially nearest $ 3d $ DOF to a fixed $ 1d $ DOF \ana{(elaborate more)}. 
	The function $ L(\cdot) $ is single-valued, but its pseudoinverse is possibly set-valued and can be extended to the whole $ \mathcal{V} $. More specifically, the inverse $ L^{-1} : \mathcal{V}_\Omega \to \mathcal{V}_\Gamma $ and its extension $ \tilde{L} : \mathcal{V}_\Omega \to \mathcal{V} $ are defined as
	\begin{equation}\label{eq:inverse-metric}
		L^{-1}(j) = \{ i \in \mathcal{V}_\Gamma \, : \, L(i) = j \}, \qquad \tilde{L}(j) = \{ j \} \cup L^{-1}(j), \quad j \in \mathcal{V}_\Omega.
	\end{equation}
	Note that if for $ j_1, j_2 \in \mathcal{V}_\Omega $, $ j_1 \neq j_2 $, then $ L^{-1}(j_1) \cap L^{-1}(j_2) = \emptyset $ and consequently $ \tilde{L}(j_1) \cap \tilde{L}(j_2) = \emptyset $. Also, it is possible to have $ L^{-1}(j) = \emptyset $ and that holds for $ j \in \mathcal{V}_\Omega $ that do not interpolate any $ i \in \mathcal{V}_\Gamma $. For example, in the reduced EMI equations, that applies to the "interior" DOFs of the 3D subdomain that have no contribution in the averaging operator $ \sigma $. On the other hand, note that $ \tilde{L}(j) \neq \emptyset $ and is surjective for all $ j \in \mathcal{V} $.
	This motivates us to redefine the lower-order term $ a_0(\cdot, \cdot) $ from \eqref{eq:graph-bilinear-c} to
	\begin{equation}\label{eq:redefine-a0}
		a_0(u, v) = \sum_{i \in \mathcal{V}_\Gamma}  m_i (u_{\Gamma, i} - u_{L(i)}) (v_{\Gamma, i} - v_{L(i)}),
	\end{equation}
	and the seminorm becomes
	\begin{equation}\label{eq:new-A0-norm}
		| v  |_{A_0}^2 = \sum_{j \in \mathcal{V}_\Omega} \sum_{i \in L^{-1}(j)} m_i (v_{\Gamma, i} - v_{\Omega, j})^2, \qquad v = (v_\Omega, v_\Gamma) \in V.
	\end{equation}
	In this setting, we can easily represent the kernel of the matrix $ A_0 $, that is
	\begin{equation}\label{eq:new-kernel-A0}
		\Ker(A_0) = \Span \{ \mathbbm{1}_{\tilde{L}(j)}, \, j \in \mathcal{V}_\Omega \},
	\end{equation}
	thus size of the $ \Ker(A_0) $ can be as large as the number of vertices in the subdomain $ \Omega $. Notice, however, that $ \Ker(A_0) $ is not equal to $ V_\Omega $ as it also involves the coupling between elements of $ V_\Gamma $ and $ V_\Omega $.
	By definition, the aggregates $ \mathfrak{a}_k $, $ k = 1, \dots, n_{agg} $, are disjoint subsets of $ \mathcal{V} $ and, hence, for every $ i \in \mathcal{V}$, there exists a unique $ k \in \{1, \dots, n_{agg} \}$, such that $ i \in\mathfrak{a}_k $ and we denote $ \mathfrak{a}(i):= \mathfrak{a}_k$. In accordance with this, we make an assumption that the aggregates are constructed as following:
	\begin{equation}\label{aggregation}
		\mathfrak{a}(j) = \tilde{L}(j), \quad j \in \mathcal{V}_\Omega \quad \text{ such that } \quad L^{-1}(j) \neq \emptyset. 
	\end{equation}
	Such an assumption is not restricting our approach as the sets $ L^{-1}(j) $ are well-defined and, for $ j_1 \neq j_2 $ where $ j_1, j_2 \in V_\Omega $, these sets are disjoint. The aggregation can be constructed by first choosing $ \tilde{L}(j) $ as aggregates as long as $L^{-1}(j) \neq \emptyset$.
	In such case, for a given $ v \in V $ and $ v_c = \sum\limits_{k = 1}^{n_{agg}} v_{\mathfrak{a}_k} \in V_c $ we have that $ a_0(v_c, v_c) = 0 $, that is the form $ a_0(\cdot,\cdot) $ vanishes on the coarse space. As a consequence, we have the stability of the decomposition \eqref{eq:decomposition} in $ A_0 $.
	
	\begin{proposition} \label{prop:stable-A0}
		For any $ v \in V $, let $ \{ v_c \} \cup \{ v_j \}_{j = 1}^J $ be the subspace decomposition defined in \eqref{eq:decomposition}. Then,
		\begin{equation} \label{eq:stability-A0}
			| v_c |^2_{A_0} + \sum_{j = 1}^J | v_j |^2_{A_0} \lesssim | v |^2_{A_0},
		\end{equation}
		that is, the decomposition is stable in $ A_0 $-seminorm. The stability constants depends on the maximal number of nonzeroes per row in $ A_0 $, maximum of Poincar\'{e} constants of each aggregate $ \mathfrak{a}_k $, $ k = 1, \dots, n_{agg} $ and the maximum the number of of nontrivial intersections between the subspaces $ V_j $, $ j = 1, \dots, J $.
	\end{proposition}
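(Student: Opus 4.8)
The plan is to first eliminate the coarse component entirely and then reduce the Schwarz sum to a zeroth-order quantity that a local Poincar\'e inequality can absorb back into the seminorm. By the aggregation choice \eqref{aggregation} every aggregate carrying a nontrivial $A_0$-contribution has the form $\mathfrak{a}(l) = \tilde{L}(l) = \{l\}\cup L^{-1}(l)$, so $v_c$ is constant on each such aggregate. Every edge of the $A_0$-graph is of the form $(i, L(i))$ with $i\in\mathcal{V}_\Gamma$, and both endpoints $i$ and $L(i)$ lie in the single aggregate $\tilde{L}(L(i))$; hence the constant value of $v_c$ cancels, $(v_c)_{\Gamma,i} - (v_c)_{\Omega,L(i)} = 0$. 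Consequently $|v_c|^2_{A_0} = 0$ and, writing $w = v - v_c$, also $|w|^2_{A_0} = |v|^2_{A_0}$. It therefore suffices to bound $\sum_{j=1}^J |v_j|^2_{A_0} = \sum_{j=1}^J |\chi_j w|^2_{A_0}$ by $|v|^2_{A_0}$.

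First I would carry out the partition-of-unity localization. Using the form \eqref{eq:redefine-a0}, the diagonal structure of $\chi_j$, and the fact that for an $\Omega$-vertex $l$ one has $(\chi_j)_l = \delta_{jl}$ (so at the endpoint $L(i)$ only $j = L(i)$ contributes), a direct expansion gives for each edge $i\in\mathcal{V}_\Gamma$ with $l = L(i)$,
\[
\sum_{j=1}^J \big((\chi_j)_i w_{\Gamma,i} - (\chi_j)_l w_{\Omega,l}\big)^2
= w_{\Gamma,i}^2 \sum_{j=1}^J (\chi_j)_i^2 - 2(\chi_l)_i\, w_{\Gamma,i} w_{\Omega,l} + w_{\Omega,l}^2 .
\]
Because $\sigma$ is surjective, \eqref{eq:index-split} guarantees every $\Gamma$-vertex is covered, so $\sum_j (\chi_j)_i = 1$ with $0\le(\chi_j)_i\le 1$, whence $\sum_j(\chi_j)_i^2\le 1$ and $(\chi_l)_i\le 1$. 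Applying Young's inequality to the cross term and multiplying by $m_i$ then yields
\[
\sum_{j=1}^J |\chi_j w|^2_{A_0} \;\lesssim\; \sum_{i \in \mathcal{V}_\Gamma} m_i \big( w_{\Gamma,i}^2 + w_{\Omega,L(i)}^2 \big).
\]

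The crux, and the main obstacle, is that the right-hand side is a \emph{zeroth-order} quantity not controlled by the pure-difference seminorm $|\cdot|_{A_0}$ on its own. This is resolved precisely by the mean-subtraction built into $w = v - v_c$ combined with the local Poincar\'e inequality. Grouping by the aggregate centers $l$ with $L^{-1}(l)\neq\emptyset$ and recognizing the diagonal $\tilde{D}_0$ of the restriction of $A_0$ to $\mathfrak{a}(l)$ (with entries $m_i$ at $i\in L^{-1}(l)$ and $\sum_{i\in L^{-1}(l)} m_i$ at $l$), the contribution of each aggregate equals $\|w\|^2_{\tilde{D}_0,\mathfrak{a}(l)}$. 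Since $v_c$ is the aggregate average, $w$ restricted to $\mathfrak{a}(l)$ is the centered function, and the graph of $A_0$ on $\mathfrak{a}(l)$ is a connected star, so \eqref{eq:graph-poincare} applies and gives $\|w\|^2_{\tilde{D}_0,\mathfrak{a}(l)} \lesssim c_{\mathfrak{a}(l)}\,|v|^2_{A_0,\mathfrak{a}(l)}$. I would note that the mismatch between the $\ell^2$-average defining $v_c$ and the $\tilde{D}_0$-weighted minimizer of \eqref{eq:graph-poincare} costs only a factor depending on $C_{agg}$ and the local weight ratios, which is absorbed into the constant.

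Finally I would sum over aggregates. As every $A_0$-edge $(i, L(i))$ belongs to exactly one aggregate $\mathfrak{a}(L(i))$, the local seminorms partition $|v|^2_{A_0} = \sum_l |v|^2_{A_0,\mathfrak{a}(l)}$, and therefore $\sum_{j=1}^J |\chi_j w|^2_{A_0} \lesssim (\max_l c_{\mathfrak{a}(l)})\,|v|^2_{A_0}$. Combined with $|v_c|^2_{A_0}=0$ this establishes \eqref{eq:stability-A0}, the constant depending on the maximal number of nonzeroes per row of $A_0$ (entering through the star sizes in the localization step), the maximum of the local Poincar\'e constants, and the local weight ratios, as claimed.
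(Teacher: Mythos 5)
Your proof is correct and follows essentially the same route as the paper's, which simply states that the argument is the $A_0$-analogue of Proposition~\ref{prop:stable-A1}: the coarse term vanishes by the aggregation choice \eqref{aggregation}, the Schwarz sum is localized to the diagonal of $A_0$ via the partition of unity, and the local Poincar\'e inequality \eqref{eq:graph-poincare} on each (star-shaped, connected) aggregate absorbs the result back into $|v|^2_{A_0}$. You have merely made explicit the steps the paper leaves implicit, including the harmless mismatch between the $\ell^2$-average and the weighted minimizer, which is a correct and worthwhile observation.
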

	
	\begin{proof}
		The proof follows analogously to the proof of \Cref{prop:stable-A1} by replacing $ A_1 $ with $ A_0 $, $ D_1 $ with the diagonal of $A_0$, and noting that $ | v_c |_{A_0} = 0 $ for the coarse space functions $ v_c = \sum\limits_{k = 1}^{n_{agg}} v_{\mathfrak{a}_k} \in V_c $.
	\end{proof}
 
	% ------------------------------------------------ %
	
    \subsection{On the multiplicative version of the preconditioner} \label{sec:multiplicative-smoother}
    As is well known~\cite{Xu1992SIAMReview, 1995GriebelM_OswaldP-aa}, the additive and the multiplicative versions of a two-level AMG preconditioner, including the preconditioner described earlier, are closely related. The analogue of Theorem~\ref{thm:condition-number} is basically equivalent to~\cite[Theorem~4.2]{M3AS_singular}. Then the multiplicative smoother $S_{mult}$ is clearly equivalent to the additive smoother $S$ in \eqref{eq:preconditioner} and the following inequality holds:
	\begin{equation}\label{comp}
		\frac14\| w \|_{S^{-1}}^2 \lesssim \| w \|_{S_{mult}^{-1}}^2 \lesssim \| w \|_{S^{-1}}^2.
	\end{equation}
	The upper bound only depends on the maximal degree in the graph of subspaces $\{ V_j \}_{j=1}^J$ with vertices $\{1, \ldots, J \}$ and edges given by pairs of indices $ (i,j)$  for which $ V_j \cap V_i \neq \{ 0 \}$. That is the maximal number of intersections of any subspace $ V_j $ with other subspaces $ V_i, \,i \neq j $. Such results are found in many references, see e.g. \cite{Xu1992SIAMReview}, \cite{1995GriebelM_OswaldP-aa},  \cite{2008VassilevskiP-aa}, \cite[Lemma~3.3]{2008ZikatanovLT-aa}.

    Using the techniques which we have employed in showing stability for the subspace decomposition in $A_1$-norm, we can show the "weak" approximation property, which is a necessary and sufficient condition for uniform two-level AMG convergence, \cite[Theorem~3.5]{2008ZikatanovLT-aa}, \cite{hu_vassilevski}. The approximation result reads: For any $v\in V$ there exists $v_c\in V_c$ such that the following estimates hold independently of parameters $ h $ and $ \gamma $:
    \begin{equation}\label{meq:wap}
    	\|v - v_c \|_{S_{mult}^{-1}}^2 \lesssim \| v \|_{A_\iota}^2, \quad \iota \in\{0,1\},
    \end{equation}
    where, when $\iota=0$ we can take $v\in \operatorname{Ker}(A_0)^{\perp_{A_1}}$. This estimate, which follows from the results we have shown for the additive preconditioner, gives the uniform convergence of the multiplicative method.

	% ------------------------------------------------ %
  
	% ------------------------------------------------ %
	% new section
	% ------------------------------------------------ %
	\section{Implementation} \label{sec:implementation}
	
	We dedicate this section to explaining what the convergence conditions and how they can be utilized to construct uniformly convergent multilevel method in different applications, namely with regards to example problems in \Cref{sec:examples}. Moreover, we confirm the theory with numerical results that are obtained using software components HAZniCS \cite{haznics}.
          Unless stated otherwise the finite element problems are assembled using FEniCS
          \cite{logg2012automated} and FEniCS\textsubscript{ii} \cite{kuchta2020assembly}.
	
	% ------------------------------------------------ %
	
	\subsection{Bidomain model} \label{sec:impl-bidomain}
	
	%First, our goal is to solve \eqref{eq:bidomain-problem} with a robust AMG method. 
	Consider $ A $ and $ M $ to be the matrix representations of the Laplacian $ -\Delta : V \to V' $ and the $ L^2 $-inner product on $ V $, respectively. With $ \bar{V} = V \times V $, let $ \bar{K} : \bar{V} \to \bar{V}'$ represent the system operator in the equation \eqref{eq:bidomain-problem}.
	%Assuming that $ \alpha_e = \alpha_i = \alpha $ and $ \gamma = \frac{\tilde{\gamma}}{\alpha} $, 
	Furthermore, denote a coarse space $ V_c \subset V $ and the corresponding (surjective) prolongation operator $ P : V_c \to V $ and combine $ \bar{V}_c = V_c \times V_c $ and $ \bar{P} : \bar{V}_c \to \bar{V} $.
	As mentioned in \Cref{sec:multiplicative-smoother}, the necessary and sufficient condition for convergence of the two-level AMG method for solving \eqref{eq:bidomain-problem} is the weak approximation property. If $ \bar{S} : \bar{V}' \to \bar{V} $ is the smoother for the two-level AMG method on $ \bar{K} $, then we want that for any $ y \in \bar{V} $ there exists $ y_0 \in \bar{V}_c $ such that
	\begin{equation}\label{eq:bidomain-weak-property}
		\| y - \bar{P} y_c \|^2_{\bar{S}^{-1}} \leq C \| y \|^2_{\bar{K}},
	\end{equation}
	for some $ C > 0 $. This is satisfied for standard (point-wise) smoothers such as Jacobi or Gauss-Seidel method, but they do not guarantee that the bound is independent of $ \gamma $. On the other hand, following the theory derived in \Cref{sec:two-level-AMG}, we can show that the following smoother
	\begin{equation} \label{eq:bidomain-smoother}
		\bar{S}^{-1} = \begin{pmatrix}
			\alpha_e D_A + \gamma D_M & - \gamma D_M \\
			- \gamma D_M & \alpha_i D_A + \gamma D_M
		\end{pmatrix},
	\end{equation}
	satisfies the stability and kernel conditions, with $ D_M $ and $ D_A $ being diagonals of matrices $M$ and $A$, respectively. Actually, we can directly prove the weak approximation property in this case, by relying on two results from \eqref{eq:A-le-D}: For $ v \in V $ there exists $ v_c \in V_c $ such that
	\begin{equation} \label{eq:bidomain-weak-jacobi}
			\| v - P v_c \|^2_{D_A} \leq C_A \| v \|^2_{A} \quad \text { and } \quad \| v - P v_c \|^2_{D_M} \leq C_M \| v \|^2_{M},
	\end{equation}
	with $ C_A, C_M > 0 $ depending only on the number of nonzeroes per row in $ A $ and $ M $, respectively.
	Take $ y = \begin{pmatrix} v_e \\ v_i \end{pmatrix} \in \bar{V} $. Define $ w^{+} = \half (v_e + v_i) $ and $ w^{-} = \half (v_e - v_i) $, so that 
	\begin{equation} \label{eq:function-decomp-bidomain}
		y = \begin{pmatrix}
			w^{-} \\ - w^{-}
		\end{pmatrix} 
		+
		\underbrace{
			\begin{pmatrix}
				w^{+} \\ w^{+}
			\end{pmatrix}
		}_{\in \Ker(\bar{M})}.
	\end{equation}
	We know that $ w^{+}, w^{-} \in V $ so there exist $ w^{+}_c, w^{-}_c \in V_c $ that satisfy \eqref{eq:bidomain-weak-jacobi}. Taking $ y_c \in \bar{V}_c $ as 
	$ y_c = \begin{pmatrix}
		w^{-}_c \\ - w^{-}_c
	\end{pmatrix} 
	+
	\begin{pmatrix}
		w^{+}_c \\ w^{+}_c
	\end{pmatrix}, $
	it follows that
	\begin{align*}
		\| y - \bar{P} y_c \|^2_{\bar{S}^{-1}} 
		& = \| \begin{pmatrix} w^{-} - P w^{-}_c \\ -(w^{-} - P w^{-}_c) \end{pmatrix} \|^2_{\bar{S}^{-1}} + \| \begin{pmatrix} w^{+} - P w^{+}_c \\ w^{+} - P w^{+}_c \end{pmatrix} \|^2_{\bar{S}^{-1}} \\
		& = (\alpha_e + \alpha_i) \left( \| w^{-} - P w^{-}_c \|^2_{D_A} + \| w^{+} - P w^{+}_c \|^2_{D_A} \right) + 2 \gamma \| w^{-} - P w^{-}_c \|^2_{D_M} \\
		& \leq \max\{C_A, C_M \} \left( (\alpha_e + \alpha_i) \left( \| w^{-}\|^2_{A} + \| w^{+} \|^2_{A} \right) + 2 \gamma \| w^{-} \|^2_{M} \right) \\
%		& \leq \max\{C_1, C_2 \} \left( \| \begin{pmatrix} w^{-} \\ -w^{-} \end{pmatrix} \|^2_{\bar{K}}   + \| \begin{pmatrix} w^{+} \\ w^{+} \end{pmatrix} \|^2_{\bar{K}} \right) \\
		& = \max\{C_A, C_M \} \| y \|^2_{\bar{K}}.
	\end{align*}
	It is possible to notice where a pointwise smoother would fail to control (with regards to $\gamma$) functions $ y \in V $ where the kernel part in \eqref{eq:function-decomp-bidomain} is nonzero. For example, for a Jacobi smoother given as the diagonal of the system matrix $\bar{K} $ (which only contains $D_A$), we are left with an extra term $ 2\gamma\| w^{-} \|_M^2 $ in the third line in the above proof which is unbounded in $ \gamma $.
	
	Interestingly, UA-AMG method for $ \bar{K} $ handles the near-kernel functions naturally. Due to the two-by-two block structure of $ \bar{M} $ and the fact that $ M $ is a mass matrix, we can obtain a special prolongation operator
	\begin{equation} \label{eq:prolongation}
		\bar{P} = \begin{pmatrix}
			I_V \\ I_V
		\end{pmatrix} : \bar{V}_c \to \bar{V} \text{ and } \bar{V}_c = V.
	\end{equation}
	That is, in the two-level method, each coarse space DOF is constructed by combining two fine scale DOFs that are coupled with $ \bar{M} $ and the total number of DOFs in the coarse space is half of the fine scale space. Note that $ \Range(\bar{P}) $ is exactly the near kernel of $ \bar{K} $ which implies that the coarse grid correction will
	handle this type of functions and simultaneously preserve stability. 
 
    This kind of construction is possible since the coupling is present in the whole domain and the DOFs in each subdomain align with each other. The more interesting case of %non-matching grids and 
    lower-dimensional interface coupling is given in the following subsections, where both a special prolongation and a Schwarz smoother are needed for uniform convergence. Even though, the aim of this example is to show in simple context what the conditions derived in \Cref{sec:convergence-AMG} mean and how uniform convergence can be obtained in any multilevel setting.
	
	This is also numerically confirmed in the following results. First, we consider the problem \eqref{eq:bidomain-problem} on a shape regular triangulation of a unit square domain $ \Omega = (0, 1)^2 $ and discretized with the continuous linear finite elements $ \mathbb{P}_1 $. To solve the problem, we use a Conjugate Gradient (CG) method preconditioned with different configurations of the AMG method and we test the solver performance against mesh refinement and coupling strength. The common settings in all configurations are a two-level aggregation-based AMG method, one W-cycle per iteration and a direct solver (UMFPACK \cite{lu_from_umfpack}) on the coarse grid. We compare the performance of the UA-AMG and SA-AMG with or without the special prolongation \eqref{eq:prolongation}, and with the kernel-aware Schwarz smoother \eqref{eq:preconditioner-b} or a pointwise (Gauss-Seidel) smoother. Both smoothers are applied in symmetric multiplicative way. The results are given in \Cref{fig:bidomain_2d}. As expected, regular AMG method is fairly stable with regards to mesh refinement but without a smoother satisfying kernel conditions \eqref{eq:kernel-condition} we see a significant increase in number of iterations and condition number. Adding a kernel-aware Schwarz smoother, we see a more robust performance with regards to the coupling parameters $ \gamma $ but the smoother may influence the stability conditions \eqref{eq:stability-A0} resulting (in case of UA-AMG) in the increase of number of iterations for finer meshes. This behavior is stabilized using the prolongation operator \eqref{eq:prolongation} that confirms the theory that both kernel and stability conditions are necessary for the uniform convergence.
	
	\begin{figure}
		\centering
		\includegraphics[width=0.9\textwidth]{./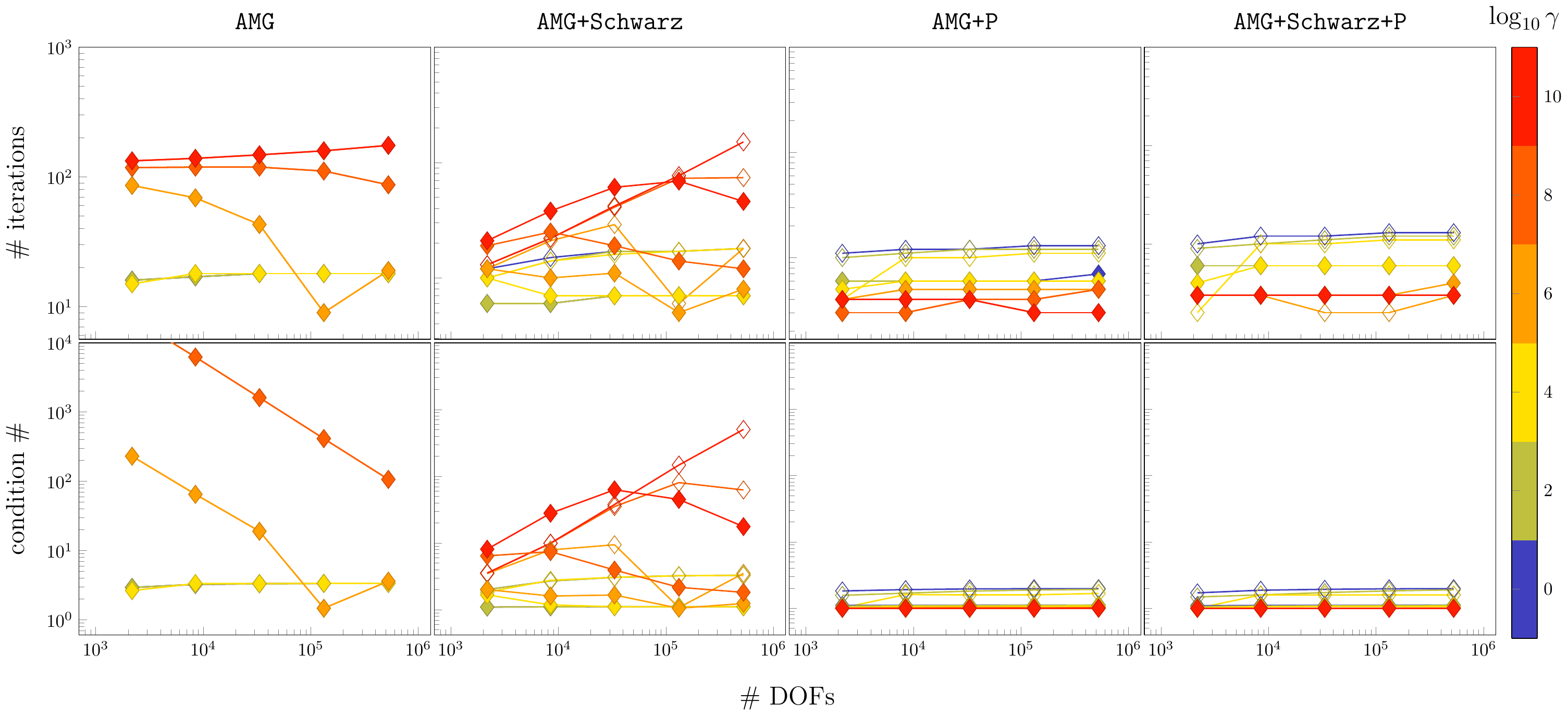}
		\vspace{-5pt}
		\caption{Number of iterations and estimated condition number of the CG method preconditioned with aggregation-based AMG to solve the bidomain problem \eqref{eq:bidomain-problem} in $ \Omega=(0,1)^2 $. We show the performance of the AMG preconditioner without regular prolongation and pointwise smoother operators, with using the Schwarz smoother \eqref{eq:preconditioner-b} satisfying the kernel conditions, with special prolongation in \eqref{eq:prolongation} and with using both the Schwarz smoother and special prolongation, which are respectively shown in first, second, third and fourth column. Hollow marks indicate using UA-AMG method and full marks using SA-AMG method. The color of the lines indicates the magnitude of the coupling parameter $\gamma$ ranging from $1$ (blue) to $10^{10}$ (red).
		}
		\label{fig:bidomain_2d}
	\end{figure}
	
	Additional performance results of the AMG-preconditioned CG method are shown in a three-dimensional setting in \Cref{fig:bidomain_3d}. We consider again a shape regular simplicial mesh of the unit cube domain $ \Omega = (0, 1)^3 $ and $ \mathbb{P}_1 $ finite element discretization. We study the number of CG iterations, condition number and CPU time consumption with regards to mesh refinement and coupling parameter magnitude. We can conclude that while the AMG preconditioner, both with and without using the prolongation operator \eqref{eq:prolongation}, performs uniformly with regards to the system size, the prolongation operator is definitely needed to achieve stable performance for larger values of $\gamma$.
	
	\begin{figure}
		\centering
		\includegraphics[width=0.8\textwidth]{./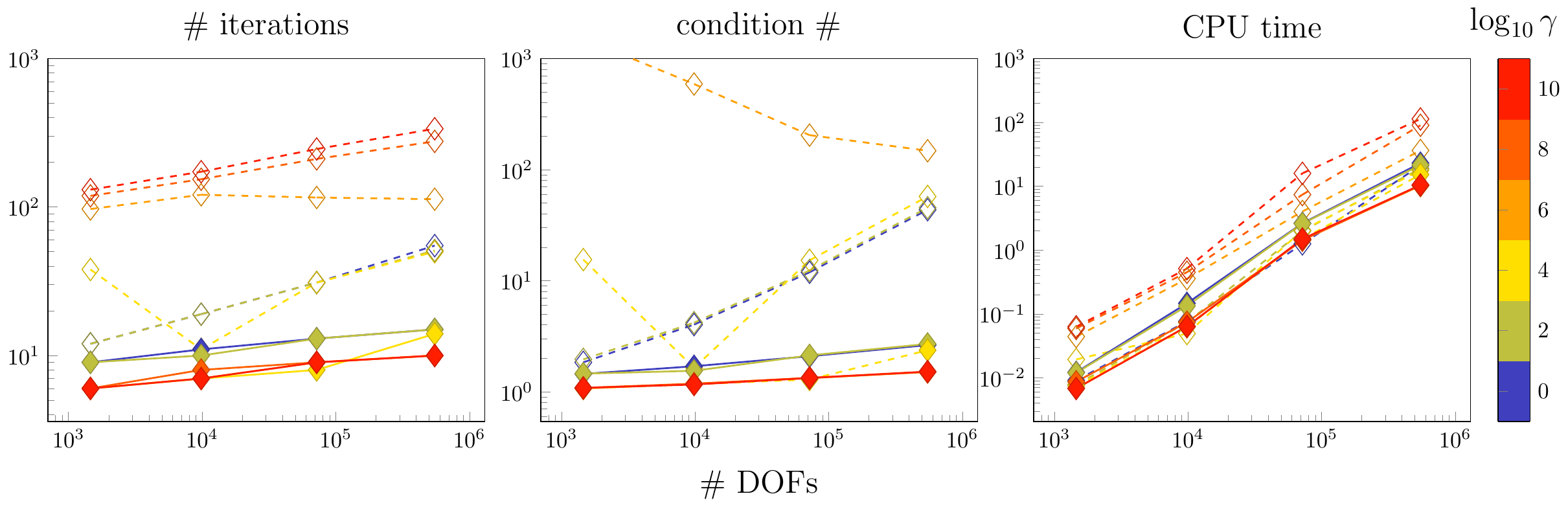}
		\vspace{-5pt}
		\caption{Number of iterations, estimated condition number and total solving CPU time of the CG method preconditioned with UA-AMG to solve the bidomain problem \eqref{eq:bidomain-problem} in $ \Omega=(0,1)^3 $, shown in first, second and third column respectively. We compare the performance of the standard AMG preconditioner (marked with dashed lines) and AMG with using the special prolongation in \eqref{eq:prolongation} (marked with full lines). The color of the lines indicates the magnitude of the coupling parameter $\gamma$ ranging from $1$ (blue) to $10^{10}$ (red).
		}
		\label{fig:bidomain_3d}
	\end{figure}

        \begin{remark}[Geometric multrigrid]
          The bidomain smoother \eqref{eq:bidomain-smoother} can be viewed as a Schwarz smoother for
          degrees of freedom located at a ``star''-patch/macroelement of each
          vertex. To show robustness of our approach in the geometric multigrid
          setting we implemented a multigrid preconditioner for \eqref{eq:bidomain-problem}
          using the PCPATCH framework \cite{pcpatch} for the 
          required space decompositions. The finite element discretization
          was done with Firedrake \cite{firedrake} library. Using the 2D geometry from
          \Cref{sec:impl-bidomain} we show in \Cref{tab:bidomain_2d_firedrake}
          the number of CG iterations required for convergence with relative error tolerance
          of $10^{-10}$. The preconditioner applied a single $V$-cycle per CG iteration.
          We observe that the iteration counts are bounded in mesh size and the coupling
          parameter $\gamma$.
  \begin{table}
    \begin{center}
      \footnotesize{
        \begin{tabular}{l|cccccc}
          \hline
          \backslashbox{\#DOFs}{$\gamma$} & 1 & $10^2$ & $10^4$ & $10^6$ & $10^8$ & $10^{10}$\\
          \hline
2178 & 19 & 19 & 18 & 19 & 19 & 20\\
8450 & 19 & 19 & 18 & 18 & 19 & 19\\
33282 & 19 & 19 & 17 & 17 & 19 & 19\\
132098 & 19 & 19 & 17 & 17 & 17 & 18\\
526338 & 19 & 19 & 17 & 17 & 17 & 17\\
          \hline
        \end{tabular}
      }
      \caption{Number of preconditioned CG iterations using geometric multigrid
        and smoother \eqref{eq:bidomain} for bidomain equation \eqref{eq:bidomain-problem}
        with $\Omega=(0, 1)^2$ and discretization by $\mathbb{P}_1$-elements.
      }
      \label{tab:bidomain_2d_firedrake}
    \end{center}
  \end{table}
        \end{remark}

	\subsection{EMI model} \label{sec:results-emi} 
        We next investigate performance of the proposed AMG method for solving
        the EMI model \eqref{eq:emi_2d} in both 2D and 3D setting. In
        the former case we let $\Omega_i=(0, 1)\times(0, \frac{1}{2})$, $\Omega_e=(0, 1)\times(\frac{1}{2}, 1)$
        while in 3D $\Omega_i=(0, 1)^2\times(0, \frac{1}{2})$, $\Omega_e=(0, 1)^2\times(\frac{1}{2}, 1)$.
        In both cases Dirichlet conditions are prescribed on boundary surfaces parallel with
        the interface $\Gamma$. Neumann conditions are set on the remaining parts of the boundary.
        The systems \eqref{eq:emi_2d_operator} are then discretized by $\mathbb{P}_1$ elements and solved
        by preconditioned CG method. Following \Cref{sec:impl-bidomain} the preconditioner
        uses UA-AMG with maximum of 10 levels and Schwarz smoother \eqref{eq:preconditioner-b}.
        With this setup the number of iterations required for reducing the initial preconditioned
        residual norm by $10^{10}$ is given in \Cref{tab:emi_2d}. In the both $2D$ and $3D$ cases
        the iterations are bounded in the coupling strength.
  \begin{table}
    \begin{center}
      \footnotesize{
        \begin{tabular}{l|cccccc||l|cccccc}
          \hline
          \multicolumn{7}{c||}{$\Omega_i\cup\Omega_e=(0, 1)^2$} & \multicolumn{7}{c}{$\Omega_i\cup\Omega_e=(0, 1)^3$}\\
          \hline
          \backslashbox{\#DOFs}{$\gamma$} & 1 & $10^2$ & $10^4$ & $10^6$ & $10^8$ & $10^{10}$ &
          \backslashbox{\#DOFs}{$\gamma$} & 1 & $10^2$ & $10^4$ & $10^6$ & $10^8$ & $10^{10}$\\
          \hline
4290 & 16 & 15 & 15 & 15 & 15 & 15              & 150 & 3 & 3 & 3 & 3 & 3 & 3\\         
16770 & 18 & 18 & 18 & 18 & 18 & 18             & 810 & 5 & 6 & 6 & 6 & 6 & 6\\         
66306 & 19 & 19 & 19 & 19 & 19 & 19             & 5202 & 8 & 9 & 9 & 9 & 9 & 9\\        
263682 & 20 & 21 & 20 & 20 & 20 & 20            & 37026 & 13 & 13 & 13 & 13 & 13 & 13\\ 
1051650 & 21 & 22 & 20 & 20 & 20 & 20           & 278850 & 17 & 17 & 16 & 16 & 16 & 16\\
          \hline
        \end{tabular}
      }
      \caption{Number of preconditioned CG iterations required for solving
        the EMI model \eqref{eq:emi_2d} with AMG using smoother
        \eqref{eq:preconditioner-b}.
      }
      \label{tab:emi_2d}
    \end{center}
  \end{table}
	
	% ------------------------------------------------ %
	
	\subsection{Reduced 3D-1D EMI model} \label{sec:impl-3d1d}
	In case of mixed-dimensional modeling, the interface coupling is usually supported on and in a close neighborhood of the lower-dimensional subdomain, where the higher-dimensional quantity is projected using a trace or an averaging operator $ \Pi_\rho $. Hence, the representation of kernel of the coupling term is tightly linked to the representation of the operator $ \Pi_\rho $. In the following, we show how the choice of the operator $ \Pi_\rho $ influences the choice of Schwarz subspaces and how the algebraic kernel and stability conditions induce a geometric multigrid method (GMG) to solve the 3D-1D coupled problem \eqref{eq:emi_3d_operator}. %The reason behind the geometric interpretation is due to the complex and densely-filled geometry that we can encounter in applications and to give a more understandable perspective on the construction of the Schwarz subspaces. 
	\begin{figure}
		\centering
		\includegraphics[width=0.4\textwidth]{./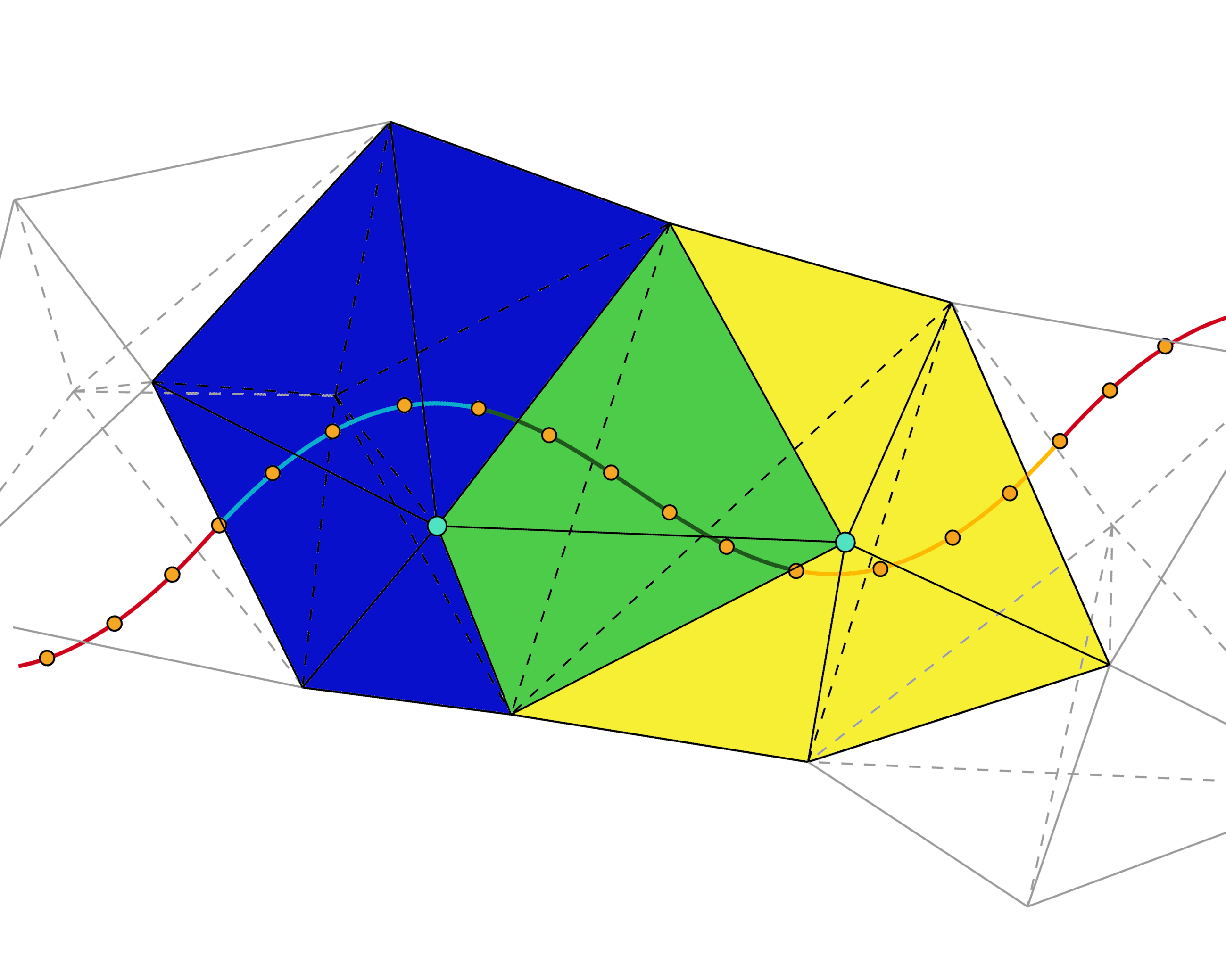}
		\hspace{20pt}
		\includegraphics[width=0.4\textwidth]{./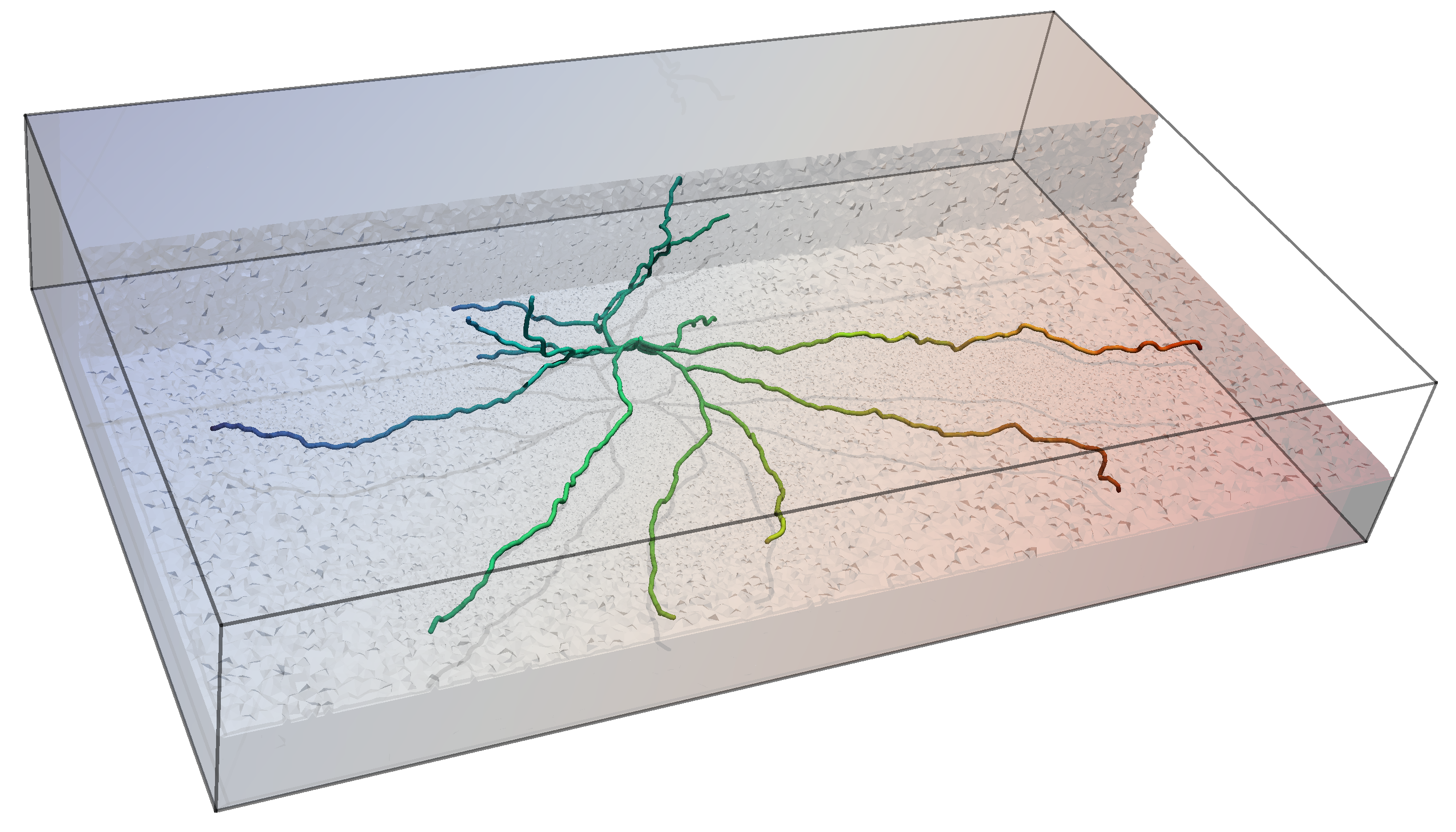}
		\vspace{-10pt}
		\caption{[Left] Illustration of overlapping Schwarz subspaces for an example of non-fitted mesh for coupled 3D-1D problem \eqref{eq:emi_3d_operator}. Assuming nodal finite element discretization, each Schwarz subspace (in blue and yellow) is local and contains the support of functions (3D and 1D) defined in DOFs that are coupled via the operator $\Pi_{\rho}$. Here, the radius of coupling $\rho$ contains only the closest 3D nodes (in light blue). The overlap is marked in green. [Right]  Domain geometry of the 3D-1D problem \eqref{eq:emi_3d_operator} The 1D domain is the neuron and the network of neuronal dendrites while the 3D domain (a shallow clip) represents extracellular space. The outline of the 3D domain is marked with black lines.
		}
        \vspace{-10pt}
		\label{fig:subspaces-3d1d}
	\end{figure}
	
	Assume we are given simplicial meshes of $ \Omega $ and $ \Gamma $, i.e. $ \mathcal{T}_h^{\Gamma} $ and $\mathcal{T}_h^{\Omega} $ that do not necessarily match. 
	Additionally assume that $ V = V_\Omega \times V_\Gamma $ is nodal-based FEM approximation, e.g. $ V_\Omega = \mathbb{P}_1 (\Omega) $ and $V_\Gamma = \mathbb{P}_1 (\Gamma)$. %For example, this would be the case in non-nested 3D-1D problems where $ \Gamma = $ is the 1D curve of the blood vessel and $ \Omega $ would be the surrounding 3D tissue.
	We can define the interface (metric) operator similarly to \eqref{eq:interface_op} as $ R = \begin{pmatrix} -\Pi_\rho & I_{\Gamma} \end{pmatrix} $. Denote also $ n_\Omega = \dim V_\Omega $, $ n_{\Gamma} = \dim V_\Gamma $ and $ n = \dim V = n_\Omega + n_\Gamma $. Then, we can describe the kernel of coupling operator $ A_0 = R^T M_\Gamma R $ as
	\begin{equation}\label{eq:kernel_3d1d}
		\Ker(A_0) = \left\{  \begin{pmatrix} v_{\Omega} \\ v_{\Gamma} \end{pmatrix} \in V : \Pi_\rho v_{\Omega} = v_{\Gamma}  \right\} 
		= \left\{ \begin{pmatrix} v_{\Omega} \\ \Pi_\rho v_{\Omega} \end{pmatrix}, v_{\Omega} \in V_\Omega  \right\}.
	\end{equation}
	We can decompose the whole space as $ V = \Ker(A_0) \oplus \Ker(A_0)^{\perp} $, where $ \perp $ regards to the orthogonality in the $ A $-norm, with $ A = A_1 + \gamma A_0 $ and $ A_1 = \diag\{A_{\Omega}, A_{\Gamma} \}$. We closely follow the derivation in \Cref{sec:kernel-condition} to find a Schwarz decomposition of $ V $ that satisfies the kernel condition \eqref{eq:kernel-condition}.
	
	Assume some ordering of DOFs in $ V_\Omega $ and $ V_\Gamma $. Motivated by \eqref{eq:kernel_3d1d}, we can say that for every $ i \in \{1, \dots, n_\Omega \} $ we can define
	\begin{equation}\label{eq:neighbors}
		\mathcal{N}_\Gamma(i) = \{ k \in \{ 1, \dots, n_\Gamma \} : (R)_{ki} \neq 0 \},
	\end{equation}
	where $ (R)_{ij} $ is the element in $ R $ in $ i $-th row and $ j $-th column. %Basically, $ \mathcal{N}_\Gamma(i) $ is a set of neighbors of DOF $ i $ in terms of connectivity (sparsity pattern) in $ R $. 
	Therefore, for each $ i \in \{1, \dots, n_\Omega \} $ we define
	\begin{align}
		\mathcal{T}_h^i & = \{ \tau \in \mathcal{T}_h^{\Omega} : \bm{x}^i \in \tau \} \cup \{ \tau \in \mathcal{T}_h^{\Gamma} : \bm{x}^k \in \tau, \, k \in \mathcal{N}_\Gamma(i) \}, \label{triangles} \\
		\Omega_h^i & = \interior \left( \bigcup \mathcal{T}_h^i \right). \label{patches}
	\end{align}
	with $ \bm{x}^j $ are the coordinates of the node $ j $. 
	Then, the Schwarz subspaces are given by
	\begin{equation} \label{eq:schwarz_subspaces}
		V_i = \left\{ v = \begin{pmatrix} v_\Omega \\ v_\Gamma \end{pmatrix} \in V : \support(v) \subset \bar{\Omega}_h^i \right\} \quad i \in \{1, \dots, n_\Omega \} \quad \text{ and } \quad V = \sum\limits_{i = 1}^{n_\Omega} V_i.
	\end{equation}
	Using this definition and with simple computation, it follows that $ \Ker(A_0) = \sum\limits_{i = 1}^{n_\Omega} \Ker(A_0) \cap V_i $.
	
	This construction of Schwarz subspaces is used in the following numerical example. The problem is defined by the geometry
	illustrated in the right part of \Cref{fig:subspaces-3d1d}. The neuron geometry is obtained from the NeuroMorpho.Org inventory of digitally reconstructed neurons, and glia \cite{neuromorpho}. The neuron from a mouse’s brain	includes only dendrites (no axon or soma) including a total of 25 branches. It is embedded in a rectangular box of approximate dimensions 222 $\mu$m $\times$ 369 $\mu$m $\times$ 65 $\mu$m. Then, the mixed-dimensional geometry is discretized with an unstructured tetrahedron mesh fitted to $\Gamma$, i.e., line segments in the mesh of $\Gamma$ are also edges of the 3D mesh of $\Omega$. We use $\mathbb{P}_1$ finite elements for discretization of both the 3D and 1D function spaces. In total, we have 3 391 127 DOFs in 3D and 7281 DOFs for the 1D problem. Additionally, we enforce homogeneous Neumann conditions on the outer boundary of both subdomains.
	
	To obtain the numerical solution, we use the CG method preconditioned
	with the AMG method described in \Cref{sec:two-level-AMG}. The convergence is considered reached if the $l_2$ relative
	residual norm is less than $10^{-6}$. We choose the SA-AMG that uses the block Schwarz smoother (symmetric multiplicative)
	defined by the kernel decomposition \eqref{eq:schwarz_subspaces} for the DOFs that couple with regards to $\Pi_\rho$ and symmetrized Gauss-Seidel smoother on the 3D interior DOFs.
	We study the performance of our solver with regards to parameters $ \gamma $ that, resulting from the coupled membrane ODE from the full EMI model, relates to inverse of the time step size $\Delta t$, the coupling/dendrite radius $\rho$ and the membrane capacitance $C_m$ \cite{buccino2021improving}. That said, the intra- and extra-cellular conductivities and membrane capacitance parameters remain constant and fixed throughout their respective domains to $\alpha_e$ = 3 mS cm$^{-1}$, $\alpha_i$ = 7 mS cm$^{-1}$ and $C_m$ = 1 $\mu$F cm$^{-2}$ \cite{buccino2019}, while we vary the time step size and coupling radius. 
 The results given in \Cref{tab:emi_3d1d}. The first three rows use the averaging operator \eqref{eq:average_op} as the coupling operator between 3D and 1D DOFs with the radius $\rho$ as the coupling radius. Thus, the Schwarz subspaces \eqref{eq:schwarz_subspaces} are larger with larger $\rho$ and evaluating the Schwarz smoother may become expensive. On the other hand, using a weighted average and defining the value of the 1D DOF by averaging of the values of the 3D DOFs at the same element (at a distance at most $h$ from the 1D DOF)results in Schwarz subspaces of a smaller dimension. Hence, even though the number of iterations is slightly larger, the application of the Schwarz smoother in this case is computationally cheaper than the case when the averaging is done using the prescribed physical radius. Note that we still need to scale the physical parameters for intracellular space and membrane due to dimension reduction, and we use $\rho$ = 1 $\mu$m. Additionally, we note that this type of averaging corresponds to an implementation of the 3D-to-1D trace operator, but such problem formulation is not well-posed in standard norms and can cause issues with $h$-refinement \cite{gjerde2020singularity}. In summation, we observe stable number of CG iterations in all cases considered, that is, the method is robust with regards to the problem parameters.

	 \begin{table}
		\begin{center}
			\footnotesize{
				\begin{tabular}{l|cccccc}
					\hline
					\backslashbox{$\rho$ [$\mu$m]}{$(\Delta t)^{-1}$ [s$^{-1}$]} & 1 & $10^2$ & $10^4$ & $10^6$ & $10^8$ & $10^{10}$ \\
					\hline
					5.0{\color{white}*} & 2 & 2 & 2 & 3 & 3 & 4  \\             
					1.0{\color{white}*} & 2 & 2 & 2 & 3 & 3 & 4 \\         
					0.2{\color{white}*} & 2 & 2 & 2 & 3 & 4 & 4  \\           
					0.0* 				& 5 & 5 & 6 & 8 & 10 & 10  \\         
					\hline
				\end{tabular}
			}
			\caption{Number of preconditioned CG iterations required for solving
				the reduced EMI model \eqref{eq:emi_3d_operator} with AMG using smoother
				\eqref{eq:preconditioner-b}. (*) denotes that we are using the 3D-to-1D trace operator as the coupling operator.
			}
			\label{tab:emi_3d1d}
		\end{center}
	\end{table}

	\section{Conclusions} \label{sec:conclusion}
	We have developed an AMG method to solve coupled interface-driven multiphysics problems. The method is aggregation-based and introduces a custom Schwarz smoother that specifically handles the strongly weighted lower order term on the interface. We state two conditions, the kernel and the stability conditions, required for the Schwarz decomposition and the aggregation to ensure uniform convergence of the two-level method. The conditions are constructive and the method is purely algebraic only requiring information on the coupling of the interface degrees of freedom. This means that the solver can be easily implemented and applied to a variety of PDE systems. Additionally, the solver can also be realized in a geometric multigrid way, allowing for direct grid refinement around lower-dimensional inclusions. We have highlighted the effectiveness of the proposed solver to solve problems arising in modeling brain biomechanics, specifically the bidomain equations, the EMI equations and the 3D-1D coupled EMI equations.
 
	\appendix
	
	\section{Finite element matrices and graph Laplacians}   \label{sec:apx-graph-laplacians}
	We now show that a finite element discretization of an elliptic PDE is spectrally equivalent to a weighted graph Laplacian problem. The constants of the spectral equivalence depend on the polynomial degree used for the discretization.

	\begin{lemma} Let $\mathcal{T}_h$ be a simplicial mesh in $\mathbb{R}^d$ and $A_h\in \mathbb{R}^{N\times N}$ be the stiffness matrix corresponding to the discretization of an elliptic operator $L u :=-\operatorname{div}\left(\kappa \nabla u\right)$ with piece-wise polynomial space. Then $A_h$ is spectrally equivalent to a weighted graph Laplacian:
		\begin{equation}
			\langle A_h v,v \rangle_{\ell^2} \eqsim
			\langle A v,v \rangle_{\ell^2}, \quad
			\langle A v,w \rangle_{\ell^2}:=\sum_{e\in\mathcal{E}} \omega_e \delta_e v \,\delta_e w
		\end{equation}
	\end{lemma}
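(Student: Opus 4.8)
The plan is to reduce the global statement to a family of purely local, element-by-element spectral equivalences and then reassemble. First I would write the stiffness matrix as a sum of element contributions, $\langle A_h v, v\rangle_{\ell^2} = \sum_{T\in\mathcal{T}_h}\langle A_T v_T, v_T\rangle_{\ell^2}$, where $A_T$ is the local stiffness matrix and $v_T$ collects the degrees of freedom supported on $T$. The key observation is that each $A_T$ is symmetric positive semidefinite and, because $L u = -\operatorname{div}(\kappa\nabla u)$ annihilates constants and $\kappa$ is uniformly elliptic, its kernel is exactly $\Span\{\mathbbm{1}\}$ (the vector of all ones on the local DOFs). This is precisely the hypothesis needed to invoke \cite[Lemma~14.1]{2017XuZikatanov-aa}.

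Next I would apply that lemma on each element. A symmetric positive semidefinite matrix $B$ with $\Ker(B)=\Span\{\mathbbm{1}\}$ is spectrally equivalent to the Laplacian $L_T$ of the complete graph on the $n_T$ local DOFs, since $\langle L_T v, v\rangle_{\ell^2}=\sum_{(i,j)\subset T}(v_i-v_j)^2 = n_T\|v\|^2$ for $v\perp\mathbbm{1}$. Restricting to $\mathbbm{1}^{\perp}$ and comparing Rayleigh quotients gives
\begin{equation*}
  \frac{\lambda_2(A_T)}{n_T}\,\langle L_T v, v\rangle_{\ell^2}
  \le \langle A_T v, v\rangle_{\ell^2}
  \le \frac{\lambda_{\max}(A_T)}{n_T}\,\langle L_T v, v\rangle_{\ell^2},
\end{equation*}
where $\lambda_2(A_T)$ denotes the smallest nonzero eigenvalue. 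Setting the local edge weight $\omega_{ij}^T=\lambda_2(A_T)/n_T$ and writing $L_T$ out as a sum of squared edge differences, this reads $\langle A_T v, v\rangle_{\ell^2}\eqsim \sum_{(i,j)\subset T}\omega_{ij}^T(v_i-v_j)^2$, the equivalence constant being the effective condition number $\lambda_{\max}(A_T)/\lambda_2(A_T)$ on the complement of the kernel.

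The main obstacle is establishing that these constants are uniform, i.e.\ independent of the mesh size. I would handle this by pulling each $A_T$ back to a reference element via the affine map: under shape regularity and uniform ellipticity, the pulled-back matrix has nonzero eigenvalues confined to a fixed band, with the contrast of $\kappa$ entering multiplicatively, and the physical matrix carries an overall scaling $\sim h_T^{d-2}$. Since this scaling affects $\lambda_{\max}(A_T)$ and $\lambda_2(A_T)$ identically, their ratio is scale-invariant and bounded by a constant depending only on the polynomial degree $k$, the shape-regularity parameter, and the ellipticity ratio of $\kappa$; the weights $\omega_{ij}^T$ themselves inherit the correct $h_T^{d-2}$ magnitude.

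Finally I would reassemble. Collecting the element representations,
\begin{equation*}
  \langle A_h v, v\rangle_{\ell^2}
  \eqsim \sum_{T\in\mathcal{T}_h}\sum_{(i,j)\subset T}\omega_{ij}^T(v_i-v_j)^2
  = \sum_{e=(i,j)\in\mathcal{E}}\Bigl(\sum_{T\supset\{i,j\}}\omega_{ij}^T\Bigr)(v_i-v_j)^2,
\end{equation*}
and setting $\omega_e=\sum_{T\supset\{i,j\}}\omega_{ij}^T$ for each edge $e=(i,j)$ identifies the right-hand side with the weighted graph Laplacian form $\langle A v, v\rangle_{\ell^2}$. Because a finite sum of two-sided bounds with uniformly bounded ratios again has a uniformly bounded ratio, and because the number of elements sharing any edge is controlled by shape regularity, the global equivalence follows with constants depending only on $k$, the shape regularity, and $\kappa$. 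The graph produced is exactly the sparsity graph $\mathcal{G}(A_h)$, since $(A_h)_{ij}\neq 0$ precisely when DOFs $i$ and $j$ share an element.
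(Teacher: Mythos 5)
Your proposal is correct and follows essentially the same route as the paper: localize to a single element, establish spectral equivalence of the local energy with a local graph Laplacian on the quotient space modulo constants (using the affine pullback to the reference simplex to make the constants uniform in $h$, depending only on shape regularity, the polynomial degree, and the variation of $\kappa$), and then reassemble over elements to obtain edge weights scaling like $h^{d-2}$. The only cosmetic difference is that you phrase the local equivalence via Rayleigh quotients against the complete graph Laplacian on the element's DOFs, whereas the paper states it as equivalence of two norms on the finite-dimensional space $\mathbb{R}^{n_T}/\mathbb{R}$ for an arbitrary connected local edge set $\mathcal{E}_T$.
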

	\begin{proof}
		Let us consider a simplex $T\in \mathcal{T}_h$. Let $n_T$ be the number of degrees of freedom in $T$ and we define
		\begin{equation}
			\begin{aligned}
				&    | v |^2_{1,\kappa} := \int_T \kappa \nabla v\cdot\nabla v
				=
				|T| \int_{\widehat{T}}\widehat{\kappa}\left[\Phi_T^{-1} \widehat{\nabla} \widehat{v}\right]\cdot\left[\Phi_T^{-1}\widehat{\nabla} \widehat{v}\right], \quad \\
				&    | v |^2_{A} := |T|\sum_{e \in \mathcal{E}_{T}} |e|^{-2}\omega_{e}(\delta_ev)^2,\quad \omega_e > 0, \; e\in \mathcal{E}_{T}.
			\end{aligned}
		\end{equation}
		where $\mathcal{E}_{T}\subset \{1,\ldots,n_T\}\times\{1,\ldots,n_T\}$, $|e|$ is the length of the edge $e$, and
		$\omega_e$ are to be specified soon. On a shape regular mesh this can be taken to be the diameter of $T$.
		The only requirement on $\mathcal{E}_{T}$ is that these edges (pairs of degrees of freedom) contain all degrees of freedom and the corresponding graph with vertices $\{1,\ldots,n_T\}$ and edges $\mathcal{E}_{T}$  is connected. 
		The $\widehat{~}$ denotes the standard mapping to the reference simplex in $\mathbb{R}^d$:
		\begin{equation}
			\widehat{x} \in \widehat{T} \mapsto \Phi_T \widehat{x}+x_0\in T, \quad \Phi_T=\left( x_1-x_0,\ldots,x_{d}-x_0\right).
		\end{equation}
		Notice that $\left\|\Phi_T^{-1}\right\|\eqsim |e|^{-1}$, with equivalence
		constants depending on the shape-regularity of the mesh.  Then, for
		any choice of $\omega_e>0$ we have that $|v|_A$ is a norm on
		$\mathbb{R}^m/\mathbb{R}$ and similarly $|v|_{1,\kappa}$ is also a norm
		on the same finite dimensional space. These norms are equivalent
		with constants of equivalence constants depending on shape
		regularity of the mesh and the variations in $\kappa$ in the element. The
		weights $\omega_e$ can be chosen as to minimize the constants in the
		spectral equivalence. Choosing $\omega_e=\frac{1}{T}\int_T \kappa $ for
		all $e\in \mathcal{E}_T$ works in all cases when $\kappa$ is piece-wise
		smooth.  The proof is then concluded as follows (with
		$v\in \mathbb{R}^N$):
		\begin{equation}
			\langle A_h v,v\rangle_{\ell^2} =
			\sum_T|v|_{1,\kappa}^2 \eqsim 
			\sum_T|T|\sum_{e\in \mathcal{E}_T} |e|^{-2}\omega_{e}(\delta_ev)^2 \eqsim   |v|_A^2.
		\end{equation}
	\end{proof}
	
	An instructive example is to consider the piece-wise linear continuous elements
	on a shape regular mesh. Then we can choose $A$ as follows:
	\begin{equation}
		\langle Av,v\rangle = \sum_{e\in \mathcal{E}}|e|^{d-2}\omega_e(\delta_ev)^2, \quad \omega_e=\sum_{T\supset e} \frac{1}{T}\int_T \kappa.
	\end{equation}
	With such choice the constants in the lemma only depend on the shape regularity of the mesh.

	% ------------------------------------------------ %
	%\section*{Acknowledgments}
	%AB and KAM acknowledge the financial support funded by the
	%Norwegian Research Council grant 102155. The work of XH is partially supported by the National Science Foundation under grant DMS-2208267. MK acknowledges support from Norwegian Research Council grant 303362. The research of LTZ is supported in part by the U. S.-Norway Fulbright Foundation and the U. S. National Science Foundation grant DMS-2208249.
	
	\bibliographystyle{siamplain}
	\bibliography{ref}

\end{document}